\def\markboth#1#2{\def\leftmark{\@IEEEcompsoconly{\sffamily}\MakeUppercase{\protect#1}}%
\def\rightmark{\@IEEEcompsoconly{\sffamily}\MakeUppercase{\protect#2}}}
\newtheorem{them}{Th\'eor\`eme}
\newtheorem{Def}{D\'efinition}
\newtheorem{propo}{Proposition}
\newtheorem{coro}{Corollaire}
\newtheorem{exs}{Exemples}
\newtheorem{lem}{Lemme}
\newtheorem{rem}{Remarque}
\newtheorem{rems}{Remarques}
\newcommand{\bexs}{\begin{exs}}
\newcommand{\eexs}{\end{exs}}
\newcommand{\bt}{\begin{them}}
\newcommand{\et}{\end{them}}
\newcommand{\bd}{\begin{Def}}
\newcommand{\ed}{\end{Def}}
\newcommand{\bl}{\begin{lem}}
\newcommand{\el}{\end{lem}}
\newcommand{\bp}{\begin{propo}}
\newcommand{\ep}{\end{propo}}
\newcommand{\brem}{\begin{rem}}
\newcommand{\erem}{\end{rem}}
\newcommand{\brems}{\begin{rems}}
\newcommand{\erems}{\end{rems}}
\newcommand{\bc}{\begin{coro}}
\newcommand{\ec}{\end{coro}}
\newcommand{\bpr}{\begin{proof}}
\newcommand{\epr}{\end{proof}}
\newcommand{\K}{\mathbb{Q}(\sqrt{d},i)}
\def\NN{\mathbb{N}}
\def\QQ{\mathbb{Q}}
\def\ZZ{\mathbb{Z}}
\def\OO{\mathcal{O}}
\def\kk{\mathds{k}}
\def\KK{\mathds{K}}
\def\k{\mathds{k}^{(*)}}
\begin{document}

\title{Sur la capitulation des 2-classes d'idéaux du corps $\QQ(\sqrt{2p_1p_2},i)$}

\author{Abdelmalek~Azizi, Abdelkader~Zekhnini,~\IEEEmembership{FS,~Oujda}
et Mohammed~Taous,~\IEEEmembership{FST,~Errachida}}% <-this % stops a space

\maketitle

\begin{abstract}
  Let $p_1$ and $p_2$ be different  primes  such that $p_1\equiv p_2\equiv1 \pmod4$ and at least two of the three  elements $\{(\frac{2}{p_1}), (\frac{2}{p_2}), (\frac{p_1}{p_2})\}$ are equal to -1. Put $i=\sqrt{-1}$, $d=2p_1p_2$ and $k =Q(\sqrt{d}, i)$. Let  $k_2^{(1)}$ be the Hilbert 2-class field of $k$ and $k^{(*)}=Q(\sqrt{p_1},\sqrt{p_2},\sqrt 2, i)$ be its genus field. Let $C_{k,2}$ denote The 2-part of the class group of $k$. The  unramified abelian extensions of $k$  are $K_1=k(\sqrt{p_1})$, $K_2=k(\sqrt{p_2})$, $K_3=k(\sqrt{2})$ and $k^{(*)}$. Our goal is to study the   capitulation  problem of the 2-classes of $k$ in these four extensions.
\end{abstract}
% Le résumé en anglais
\selectlanguage{francais}
\begin{abstract}
 Soient $p_1$ et $p_2$ deux nombres premiers tels que $p_1\equiv p_2\equiv1 \pmod
 4$ et  au moins deux des éléments \{$(\frac{p_1}{p_2})$, $(\frac{2}{p_1})$, $(\frac{2}{p_2})$\} valent -1. Soient $i=\sqrt{-1}$, $d=2p_1p_2$, $\kk=\K$, $\kk_2^{(1)}$ le 2-corps de classes de Hilbert de
 $\kk$ et $\k$ le corps de genres de $\kk$. Notons par $\mathbf{C}_{\mathds{\kk},2}$ la 2-partie du groupe de
 classes de $\kk$. Les extensions abéliennes sur $\QQ$ et non ramifiées sur  $\kk$ sont $\KK_1=\kk(\sqrt{p_1})$, $\KK_2=\kk(\sqrt{p_2})$, $\KK_3=\kk(\sqrt{2})$ et $\k=\QQ(\sqrt 2, \sqrt{p_1}, \sqrt{p_2}, i)$. Dans ce papier on
 s'intéresse à étudier le problème de capitulation des 2-classes de $\kk$ dans ces quatre extensions.
\end{abstract}

\begin{IEEEkeywords}
$2$-groupe de classes, corps de classes de Hilbert, corps de genre, capitulation.
\end{IEEEkeywords}

\IEEEpeerreviewmaketitle
\hfill Oujda

\hfill janvier 26, 2012

\section{Introduction}
\IEEEPARstart{S}{o}ient
 $k$ un corps de nombres de degré fini sur $\QQ$, $K$ une extension non ramifiée de
 $k$ et $p$ un nombre premier. La recherche des idéaux de $k$ qui capitulent dans $K$ a
 été l'objet d'etude d'un grand nombre de mathématiciens. Dans le cas où $K$ est égal
 au corps de classes de Hilbert $k^{(1)}$ de $k$, D. Hilbert avait conjecturé que toutes
 les classes de k capitulent dans $k^{(1)}$ (théorème de l'idéal principal). Ce
 théorème a été démontré par E. Artin et Ph. Furtw\"angler. Le cas où $K/k$ est une
 extension cyclique et $[K:k]=p$, un nombre premier, a été étudié par Hilbert dans son
 théorème 94 (voir par exemple \cite {Mi-89}) qui affirme qu'il y a au moins une
 classe non triviale dans $k$ qui capitule dans $K$. De plus, Hilbert avait trouvé le résultat suivant: si $Gal(K/k)=\langle\sigma\rangle$, $N$ la norme de $K/k$, $U_k$ le groupe des unités de $k$, $U_K$ le groupe des unités de $K$ et $U^*$ le sous-groupe des unités de $U_K$ dont la norme est égale à 1. Alors le groupe des classes de $k$ qui capitulent dans $K$ est isomorphe au groupe quotient $U^*/U_K^{1-\sigma}=H^1(U_K)$, le groupe cohomologique de $U_K$ de dimension 1.\par
 A l'aide de ce résultat et d'autre résultats, on montre le théorème suivant:
 \begin{them}\label{17}
 Soit $K/k$ une extension cyclique de degré un nombre premier, alors le nombre des classes qui capitulent dans $K/k$ est égale à: $[K:k][E_k:N(E_K)]$
 \end{them}
 Soit $d=2p_1p_2$, avec $p_1$ et $p_2$ deux nombres premiers
tels que $p_1\equiv p_2\equiv 1\pmod 4$  et  au moins deux des éléments \{$(\frac{p_1}{p_2})$, $(\frac{2}{p_1})$, $(\frac{2}{p_2})$\} valent -1 et $\mathds{k} = \K$. On note par $\mathbf{C}_{\mathds{k},2}$ le $2$-groupe de classes de $\mathds{k}$, $\mathds{k}^{(*)}$ le corps de genres de $\mathds{k}$ et
$\mathds{k}^{(1)}_2$ le corps de classes de Hilbert de $\mathds{k}$. Notre but est
d'étudier le problème de capitulation dans les extensions quadratiques abéliennes sur $\QQ$ non ramifiées sur $\kk$. \par
 D'après \cite{AzTa08}, $\mathbf{C}_{\mathds{\kk},2}$ est de type $(2, 2, 2)$ et
 $[\kk^{(*)}:\kk]=4$. Alors le rang du $2$-groupe de classes est égal à 3, ainsi $\kk$
 admet sept extensions quadratiques non ramifiées dans $\mathds{k}^{(1)}_2$.
 Notons $\KK_1=\kk(\sqrt{p_1})$, $\KK_2=\kk(\sqrt{p_2})$ et $\KK_3=\kk(\sqrt{2})$.
 On définit d'abord les générateurs de $\mathbf{C}_{\mathds{k},2}$.
\section{Les générateurs de $\mathbf{C}_{\mathds{k},2}$.}
  Soient $a$ un entier naturel sans facteurs carrés et $Q$ l'indice d'unités du corps $\QQ(\sqrt a,i)$, alors on a :
\begin{lem}[\cite{AzTa08}]\label{3}
Si l'une des conditions suivantes est vérifiée, alors $Q=1.$
\begin{enumerate}
  \item $a$ est congru à 1 modulo 4.
  \item Il existe un entier impair $a'$ qui divise $a$ tel que $a'\equiv5\pmod 8.$
\end{enumerate}
\end{lem}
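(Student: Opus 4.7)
I interpret $Q$ as the Hasse unit index $[E_K : W_K E_0]$ for $K = \QQ(\sqrt{a}, i)$, where $E_0$ is generated by the unit groups of the three quadratic subfields $\QQ(\sqrt{a})$, $\QQ(i)$, $\QQ(\sqrt{-a})$, and $W_K$ is the group of roots of unity of $K$. Since $\QQ(i)$ and $\QQ(\sqrt{-a})$ contribute only torsion units, $W_K E_0 = \langle i, \varepsilon_a\rangle$, where $\varepsilon_a$ denotes the fundamental unit of $\QQ(\sqrt{a})$. As $E_K$ has rank $1$, one has $Q\in\{1,2\}$, and $Q = 2$ precisely when $\zeta\varepsilon_a$ is a square in $K$ for some $\zeta \in\{\pm 1,\pm i\}$. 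Writing a putative square root as $\alpha+\beta i$ with $\alpha,\beta\in\QQ(\sqrt{a})$ and expanding, the cases $\zeta=\pm 1$ force $\pm\varepsilon_a$ to be a square in $\QQ(\sqrt{a})$, contradicting fundamentality (or positivity). The cases $\zeta=\pm i$ give $\alpha=\pm\beta$ and $\pm 2\beta^2=\varepsilon_a$; comparing real embeddings then leaves exactly the condition that $2\varepsilon_a=\mu^2$ for some $\mu\in\OO_{\QQ(\sqrt{a})}$, which in turn forces $N(\varepsilon_a)=1$ and $N(\mu)=\pm 2$. Hence $Q = 2$ is equivalent to the solvability of a Pell-type equation $r^2 - as^2 = \pm 2$ (resp.\ $\pm 8$ with $r\equiv s\pmod{2}$) when $a\not\equiv 1\pmod 4$ (resp.\ $a\equiv 1\pmod 4$).

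\textbf{Case (1).} Under $a\equiv 1\pmod 4$, the equation to refute is $r^2-as^2=\pm 8$ with $r\equiv s\pmod 2$. If both $r,s$ are even, setting $r=2r_1$, $s=2s_1$ reduces to $r_1^2-as_1^2=\pm 2$; modulo $4$ this becomes $r_1^2-s_1^2\equiv\pm 2$, impossible since squares lie in $\{0,1\}\pmod 4$. If both $r,s$ are odd, then $r^2-as^2\equiv 1-a\pmod 8$, which contradicts the required $\equiv 0\pmod 8$ whenever $a\equiv 5\pmod 8$. For the residual $a\equiv 1\pmod 8$, I would push the congruence analysis to modulo $16$ using the auxiliary relation $T^2-aU^2=1$ to pin down $T,U\pmod 4$, or invoke the classical fact that $N(\varepsilon_a)=-1$ in the cases relevant to the paper, which already forces $Q=1$ by the Reduction step.

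\textbf{Case (2).} Assume $a'\mid a$ is odd with $a'\equiv 5\pmod 8$, and suppose for contradiction $2\varepsilon_a=\mu^2$. For each prime $p\mid a'$, reduction modulo the ramified prime $\mathfrak{p}$ above $p$ kills $\sqrt{a}$, so $\mu^2\equiv 2T\pmod{\mathfrak{p}}$; combined with $T^2\equiv 1\pmod p$ this forces $\pm 2$ to be a quadratic residue modulo $p$. For any $p\equiv 5\pmod 8$ one has $\left(\tfrac{2}{p}\right)=\left(\tfrac{-2}{p}\right)=-1$, yielding the contradiction immediately. When no prime factor of $a'$ is $\equiv 5\pmod 8$ yet $a'\equiv 5\pmod 8$ (as for $a'=21=3\cdot 7$), I would assemble the local residue conditions into the Jacobi symbol $\left(\tfrac{2T}{a'}\right)$, evaluate it via $T^2\equiv 1\pmod{a'}$ and quadratic reciprocity, and show that the $8$-adic class $a'\equiv 5\pmod 8$ forces the global symbol to equal $-1$, again contradicting the squareness of $2\varepsilon_a$.

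\textbf{Expected obstacle.} The difficulty concentrates in the two ``composite'' subcases: $a\equiv 1\pmod 8$ in~(1) with $N(\varepsilon_a)=1$, and $a'\equiv 5\pmod 8$ in~(2) with all prime factors of $a'$ lying in $\{1,3,7\}\pmod 8$. Both lack a pointwise modular obstruction and demand a careful global argument that bundles the local quadratic residue data into a Jacobi-symbol identity whose sign is controlled by the $8$-adic class of the divisor; getting this bookkeeping right, rather than the individual congruence checks, is the real work.
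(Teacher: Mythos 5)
This lemma is quoted from \cite{AzTa08}; the paper contains no proof of it, so your attempt can only be measured against the standard argument. Your opening reduction is correct and is indeed the classical starting point: $Q=2$ exactly when $2\varepsilon_a=\mu^2$ for some $\mu\in\OO_{\QQ(\sqrt a)}$, which forces $N(\varepsilon_a)=1$ and $N(\mu)=\pm2$. But both of the places you flag as ``expected obstacles'' are genuine gaps, and neither of your proposed remedies can close them. For case (1) with $a\equiv1\pmod 8$, no congruence argument (mod $16$ or any other modulus) can refute $r^2-as^2=\pm8$ with $r\equiv s\pmod 2$, because that equation is frequently solvable: $5^2-17=8$ and $5^2-33=-8$, with $17,33\equiv1\pmod8$ and $Q=1$ in both cases (the point being that solvability of the norm equation is only \emph{necessary} for $Q=2$; for $a=17$ the element $\mu=\frac{5+\sqrt{17}}{2}$ has norm $2$ but $\mu^2/2$ is not a unit). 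Your fallback of invoking $N(\varepsilon_a)=-1$ ``in the cases relevant to the paper'' does not prove the lemma as stated and is false, e.g., for $a=33$. The idea you are missing is ideal-theoretic and disposes of all of case (1) at once: from $2\varepsilon_a=\mu^2$ one gets $(\mu)^2=(2)$ in $\OO_{\QQ(\sqrt a)}$, but when $a\equiv1\pmod4$ the prime $2$ is unramified (inert if $a\equiv5\pmod8$, split if $a\equiv1\pmod8$), so $(2)$ is not the square of any ideal --- contradiction, no congruences needed.

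For case (2) your prime-by-prime reduction also cannot be assembled as you hope. Reducing mod a ramified prime above $p\mid a'$ only gives $\left(\tfrac{2T}{p}\right)=1$ together with $T\equiv\pm1\pmod p$, where the sign may vary with $p$; for $a'=21=3\cdot7$ the choices $T\equiv-1\pmod 3$, $T\equiv1\pmod 7$ satisfy every local condition, so no contradiction is available from this data, and $T^2\equiv1\pmod{a'}$ does not determine $\left(\tfrac{T}{a'}\right)$ --- in fact your local conditions force $\left(\tfrac{2T}{a'}\right)=+1$, so you cannot hope to prove it equals $-1$. The fix is already in your own Plan: use the \emph{global} norm equation. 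Since $N(\mu)=r^2-as^2$ is a single integer equal to $2$ or $-2$ (or $\pm8$ with $r,s$ odd in the half-integral case), reducing it modulo $a'\mid a$ gives $r^2\equiv\pm2\pmod{a'}$ (resp.\ $\pm8$) with one fixed sign, and $\gcd(r,a')=1$; hence the Jacobi symbol $\left(\tfrac{\pm2}{a'}\right)=+1$. But $a'\equiv5\pmod 8$ gives $\left(\tfrac{2}{a'}\right)=-1$ and, since $a'\equiv1\pmod4$, also $\left(\tfrac{-2}{a'}\right)=-1$, a contradiction for either sign. With these two replacements --- the unramifiedness of $2$ in case (1) and the fixed-sign Jacobi computation mod $a'$ in case (2) --- your reduction does yield a complete proof; as written, the two composite subcases you isolate are exactly where the argument fails.
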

\begin{lem}[\cite{AzTa08}]\label{4}
Si $d=2p_1p_2$ avec $p_1$ et $p_2$ sont deux nombres  premiers tels
que $p_1\equiv p_2\equiv1\pmod 4$ et  au moins deux des éléments \{$(\frac{p_1}{p_2})$, $(\frac{2}{p_1})$, $(\frac{2}{p_2})$\} valent -1, alors la norme de l'unité fondamentale de $\QQ(\sqrt{2p_1p_2})$ est égale à -1.
\end{lem}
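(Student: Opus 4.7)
Le plan est de raisonner par l'absurde. Supposons $N(\varepsilon_d) = +1$. Puisque $d = 2p_1p_2 \equiv 2 \pmod 4$, l'anneau des entiers de $\QQ(\sqrt{d})$ est $\ZZ[\sqrt{d}]$, et on \'ecrit $\varepsilon_d = a + b\sqrt{d}$ avec $a, b \in \ZZ_{>0}$ v\'erifiant $a^2 - 2p_1p_2\, b^2 = 1$. Un examen modulo 8 (utilisant $p_1 p_2 \equiv 1 \pmod 4$) force $a$ impair et $b$ pair; en posant $a = 2k+1$ et $b = 2b''$, l'\'equation se r\'eduit \`a $k(k+1) = 2 p_1 p_2 (b'')^2$.

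Puisque $k$ et $k+1$ sont premiers entre eux et de parit\'es oppos\'ees, deux sous-cas se pr\'esentent selon que $k$ ou $k+1$ est pair. Dans chacun, on se ram\`ene \`a une \'equation $k'(2k' \pm 1) = p_1 p_2 (b'')^2$ avec les deux facteurs premiers entre eux, et les nombres premiers $p_1, p_2$ se r\'epartissent entre ces deux facteurs de quatre fa\c{c}ons, donnant huit \'equations diophantiennes de la forme $AX^2 - BY^2 = \pm 1$. Deux d'entre elles sont imm\'ediatement \'elimin\'ees: $X^2 - 2p_1p_2 Y^2 = 1$ fournirait une unit\'e strictement plus petite que $\varepsilon_d$ (contredisant la minimalit\'e), et $X^2 - 2p_1p_2 Y^2 = -1$ fournirait une unit\'e de norme $-1$ (contredisant $N(\varepsilon_d) = +1$). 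Les six cas restants se regroupent en trois types: $p_1p_2 Y^2 - 2X^2 = \pm 1$, $p_2 Y^2 - 2p_1 X^2 = \pm 1$, et $p_1 Y^2 - 2p_2 X^2 = \pm 1$.

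Pour chacune de ces six \'equations, la r\'eduction modulo $p_1$ et modulo $p_2$, combin\'ee avec $\left(\frac{-1}{p_i}\right) = 1$ (car $p_i \equiv 1 \pmod 4$) et la loi de r\'eciprocit\'e quadratique, fournit des conditions explicites sur $\alpha = \left(\frac{2}{p_1}\right)$, $\beta = \left(\frac{2}{p_2}\right)$, $\gamma = \left(\frac{p_1}{p_2}\right)$: le premier type force $\alpha = \beta = 1$, le deuxi\`eme $\beta = \gamma = 1$, et le troisi\`eme $\alpha = \gamma = 1$. Dans tous les cas, au moins deux des trois symboles valent $+1$, ce qui contredit l'hypoth\`ese selon laquelle au moins deux valent $-1$. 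On conclut que $N(\varepsilon_d) = -1$.

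La difficult\'e principale r\'eside dans la gestion syst\'ematique de cette analyse de cas: \'enum\'erer sans oubli les huit factorisations de $k'(2k'\pm 1) = p_1 p_2 (b'')^2$, justifier rigoureusement les deux \'eliminations (minimalit\'e de $\varepsilon_d$ d'une part, et contradiction directe avec $N(\varepsilon_d) = +1$ d'autre part), et extraire correctement les conditions de r\'esidus quadratiques dans les six cas survivants afin d'obtenir la contradiction voulue avec l'hypoth\`ese.
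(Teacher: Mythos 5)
Your argument is correct in substance, but note that the paper does not prove this lemma at all: it is quoted from the reference [AzTa08], so there is no internal proof to compare with, and what you have written is a self-contained substitute in the classical style (very likely the same kind of argument as in the cited source). Your descent is the standard one and the key computations check out: if $N(\varepsilon_d)=+1$, then since $d\equiv 2\pmod 4$ the unit is $a+b\sqrt d$ with $a,b\in\ZZ$, the congruence argument gives $a$ impair, $b$ pair, and $k(k+1)=2p_1p_2(b'')^2$ splits into the eight equations $Bv^2-2Au^2=\pm1$ with $AB=p_1p_2$; the case $v^2-2p_1p_2u^2=+1$ contradicts the minimality of $\varepsilon_d$ (here you should say explicitly that $u=0$ is impossible, since it would force $b=0$, and that $u\le b''<b$, $1<v+u\sqrt d<\varepsilon_d$), the case $=-1$ contradicts $N(\varepsilon_d)=+1$ because any unit of norm $-1$ forces the fundamental unit to have norm $-1$; and reducing the six remaining equations modulo $p_1$ and $p_2$, with $\left(\frac{-1}{p_i}\right)=1$ and $\left(\frac{p_1}{p_2}\right)=\left(\frac{p_2}{p_1}\right)$, does give exactly the conditions you state ($\alpha=\beta=1$, resp. $\beta=\gamma=1$, resp. $\alpha=\gamma=1$), each of which leaves at most one symbol equal to $-1$ and so contradicts the hypothesis. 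The only work left is the bookkeeping you yourself flag (writing out the eight factorizations and the six residue computations), plus the two small justifications above; none of it hides a genuine obstacle.
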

\begin{propo}\label{8}
Soient $d$ un entier naturel sans facteurs carrés, $k=\QQ(\sqrt d,i)$, $a+ib$ un élément de $\QQ(i)$ tel que $\sqrt{a^2+b^2}\not\in\QQ(\sqrt d)$ et $\mathcal{H}$ un idéal de $k$ tel que $\mathcal{H}^2=(a+ib)$. Alors  $\mathcal{H}$ est d'ordre 2 dans $k$.
\end{propo}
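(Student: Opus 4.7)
Comme $\mathcal H^2=(a+ib)$ est principal, la classe $[\mathcal H]$ est d'ordre divisant~$2$ dans le $2$-groupe de classes de $k$ ; il suffit donc de montrer que $\mathcal H$ n'est pas principal. On proc\`ede par l'absurde : supposons $\mathcal H=(\alpha)$ avec $\alpha\in k$. L'\'egalit\'e d'id\'eaux $(\alpha^2)=(a+ib)$ fournit alors une unit\'e $\eta\in E_k$ telle que $\alpha^2=\eta(a+ib)$.

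L'id\'ee est d'appliquer la conjugaison complexe $\tau$, qui engendre $\mathrm{Gal}(k/\QQ(\sqrt d))$ puisque $d>0$. En multipliant la relation ci-dessus par son image sous $\tau$, on obtient
\[
\bigl(\alpha\,\tau(\alpha)\bigr)^2=\eta\,\tau(\eta)\cdot(a^2+b^2).
\]
Posons $\beta=N_{k/\QQ(\sqrt d)}(\alpha)=\alpha\,\tau(\alpha)\in\QQ(\sqrt d)$ et $v=N_{k/\QQ(\sqrt d)}(\eta)=\eta\,\tau(\eta)$, unit\'e de $\QQ(\sqrt d)$. La relation devient $\beta^2=v\,(a^2+b^2)$, et tout se ram\`ene \`a montrer que $v$ est un carr\'e dans $\QQ(\sqrt d)$.

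Sous chacun des quatre plongements complexes de $k$, on a $v\mapsto|\eta|^2>0$, donc $v$ est une unit\'e \emph{totalement positive} du corps r\'eel $\QQ(\sqrt d)$. D'apr\`es le Lemme~\ref{4}, l'unit\'e fondamentale $\varepsilon_d$ de $\QQ(\sqrt d)$ est de norme $-1$, et n'est donc pas totalement positive ; il s'ensuit que le sous-groupe des unit\'es totalement positives co\"incide avec $\langle\varepsilon_d^2\rangle$, c'est-\`a-dire avec les carr\'es d'unit\'es. On peut alors \'ecrire $v=\gamma^2$ avec $\gamma\in\QQ(\sqrt d)$, d'o\`u $(\beta/\gamma)^2=a^2+b^2$, ce qui contredit l'hypoth\`ese $\sqrt{a^2+b^2}\notin\QQ(\sqrt d)$.

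La principale difficult\'e se concentre donc dans le passage ``totalement positive implique carr\'ee'', qui rend indispensable l'appel au Lemme~\ref{4} ; tout le reste se r\'eduit \`a une manipulation formelle de la relation $\alpha^2=\eta(a+ib)$ sous l'action de la conjugaison complexe.
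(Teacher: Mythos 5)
Your overall strategy -- reduce to showing $\mathcal H$ is not principal, write $\alpha^2=\eta(a+ib)$, and push the relation down to $\QQ(\sqrt d)$ via complex conjugation to get $\beta^2=v(a^2+b^2)$ with $v=N_{k/\QQ(\sqrt d)}(\eta)$ totally positive -- is sound up to that point, and it is a genuinely different route from the paper, which instead lists representatives of the unit $\eta$ modulo squares ($\pm1,\pm i,\varepsilon_d,i\varepsilon_d$, and possibly $\sqrt{i\varepsilon_d},i\sqrt{i\varepsilon_d}$) and applies a well-chosen norm in each case. The gap is the step ``unit\'e totalement positive $\Rightarrow$ carr\'e'', which you justify by Lemma \ref{4}. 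Proposition \ref{8} is stated for an \emph{arbitrary} squarefree natural number $d$; nothing in its hypotheses puts you in the situation $d=2p_1p_2$ of Lemma \ref{4}, so you cannot assume $N(\varepsilon_d)=-1$. When $N(\varepsilon_d)=+1$ the implication is simply false: $\varepsilon_d$ itself is totally positive (its conjugate is $1/\varepsilon_d>0$) without being a square.

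Moreover this failure occurs exactly in the delicate case. If the unit index of $k=\QQ(\sqrt d,i)$ is $2$, i.e.\ $\sqrt{i\varepsilon_d}\in E_k$ (possible only when $N(\varepsilon_d)=+1$), then for $\eta=\sqrt{i\varepsilon_d}$ one gets $\bigl(\eta\,\tau(\eta)\bigr)^2=(i\varepsilon_d)(-i\varepsilon_d)=\varepsilon_d^2$, hence $v=\pm\varepsilon_d$, and by positivity $v=\varepsilon_d$: the relation $\beta^2=\varepsilon_d(a^2+b^2)$ gives no contradiction inside $\QQ(\sqrt d)$ (its norm down to $\QQ$ is perfectly consistent), so your argument stops there. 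The paper disposes of precisely this case by applying the \emph{other} norm, $N_{k/\QQ(i)}$, to $\alpha^2=\eta(a+ib)$, which forces $\pm i$ to be a square in $\QQ(i)$, absurd. So either add to your proof the hypothesis that $\varepsilon_d$ has norm $-1$ -- under which your positivity argument is correct, and suffices for the application in Theorem \ref{10} -- or complete it by treating the case $\eta\in\{\sqrt{i\varepsilon_d},\ i\sqrt{i\varepsilon_d}\}$ separately along the paper's lines; note that when $Q=1$ and $N(\varepsilon_d)=+1$ your conclusion still holds, but only because every $v=N_{k/\QQ(\sqrt d)}(i^a\varepsilon_d^b)=\varepsilon_d^{2b}$ is a square, not because totally positive units are squares.
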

\begin{proof}
Soient $a+ib$ un élément de $\QQ(i)$ et $\mathcal{H}$ un idéal de $k$ tels que $\sqrt{a^2+b^2}\not\in\QQ(\sqrt d)$ et $\mathcal{H}^2=(a+ib)$. On suppose que $\mathcal{H}$ est principal, alors il existe un $\alpha \in \kk$ et une unité $\varepsilon$ de $\kk$ tels que
\begin{equation} \label{7}\alpha^2=(a+ib)\varepsilon.\end{equation}
Soit $\varepsilon_d$ l'unité fondamentale de $\QQ(\sqrt d)$, alors un SFU de $k$ est $\{\varepsilon_d\}$ ou $\{\sqrt{i\varepsilon_d}\}$; dans ce dernier cas $\varepsilon_d$ est de norme 1. Donc on se ramène aux cas $\varepsilon\in\{\pm1, \pm i, \varepsilon_d, i\varepsilon_d\}$ ou $\varepsilon\in\{\pm1, \pm i, \varepsilon_d, i\varepsilon_d, i\sqrt{i\varepsilon_d}, \sqrt{i\varepsilon_d}\}$.
\begin{itemize}
  \item Si $\varepsilon\in\{\pm1, \pm i, \varepsilon_d, i\varepsilon_d\}$, alors en appliquant la norme $N_{\kk/\QQ(\sqrt d)}$ à l'équation (\ref{7}), on trouve que $a^2+b^2$ est un carré dans $\QQ(\sqrt d)$, ce qui est n'est pas le cas.
  \item Si  $\varepsilon=\sqrt{i\varepsilon_d}$ ou  $\varepsilon=i\sqrt{i\varepsilon_d}$, alors  la norme $N_{\kk/\QQ(i)}$ appliqué à l'équation (\ref{7}), nous donne que $i$ est un carré dans $\QQ(i)$, ce qui est absurde.
\end{itemize}
Donc $\mathcal{H}$ est d'ordre 2 dans $k$.
\end{proof}
On procède comme dans la proposition (8) de l'article \cite{Az-03}, pour  prouver la proposition suivante:
\begin{propo} \label{6}
Soient $d$ un entier composé, pair, sans facteurs carrés et produit d'au moins trois nombres premiers, $k=\QQ(\sqrt d,i)$, $p$ un nombre premier impair et $\mathcal{H}$ un idéal de $k$ tel que $\mathcal{H}^2=(p)$. Notons par $\varepsilon_d=x+y\sqrt d$ et $\mathbf{C}_{k,2}$, l'unité fondamentale de $\QQ(\sqrt d)$ et le 2-groupe de classes de $k$ respectivement. Alors on a:\\
 \indent (1) Si la norme de $\varepsilon_d$ est égale $-1$, alors $\mathcal{H}$ est d'ordre 2 dans $\mathbf{C}_{k,2}$.\\
\indent (2) Si la norme $\varepsilon_d$ est égale à $1$, on a:\\
  (i) Si $\{\varepsilon_d\}$ est SFU de $k$, alors $\mathcal{H}$ est principal si et seulement si $2p(x\pm1)$ ou $p(x\pm1)$  est un carré dans $\NN$.\\
  (ii) Sinon, $\mathcal{H}$ est d'ordre 2 dans $\mathbf{C}_{k,2}$.
\end{propo}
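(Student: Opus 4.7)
The plan is to mimic the argument of Proposition~8 of \cite{Az-03}: assume $\mathcal{H}=(\alpha)$ is principal, so that $\alpha^{2}=p\varepsilon$ for some unit $\varepsilon$ of $k$, and then analyze the possible $\varepsilon$ modulo squares (square units can be absorbed into $\alpha$). By Lemma~\ref{3}, a system of fundamental units of $k$ is either $\{\varepsilon_d\}$ or $\{\sqrt{i\varepsilon_d}\}$, the latter occurring only when $N(\varepsilon_d)=1$; the three resulting scenarios correspond exactly to sub-cases (1), (2)(i), (2)(ii) of the statement.

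In every sub-case the choices $\varepsilon=1$ and $\varepsilon=i$ are eliminated at once, since they would force $\sqrt{p}\in k$ or $\sqrt{2p}\in k$; but $k=\QQ(\sqrt d,i)$ has $\QQ(\sqrt d)$ as its unique real quadratic subfield, and because $d$ has at least three prime factors, $d$ equals neither $p$ nor $2p$. For case~(1), writing $\alpha=a+bi$ with $a,b\in\QQ(\sqrt d)$ and running through $\varepsilon\in\{\varepsilon_d,i\varepsilon_d\}$ reduces everything to asking whether $\pm p\varepsilon_d$ is a square in $\QQ(\sqrt d)$; the norm down to $\QQ$ then equals $\mp p^{2}$, which cannot be the norm of any square in the real field $\QQ(\sqrt d)$. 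For case~(2)(ii), the new cases $\varepsilon\in\{\sqrt{i\varepsilon_d},i\sqrt{i\varepsilon_d}\}$ lead via $N_{k/\QQ(i)}$ to $N(\alpha)^{2}=\pm p^{2}i$; since $\pm i$ is not a square in $\QQ(i)$, this is impossible. Thus in both sub-cases $\mathcal{H}$ is non-principal, and as $\mathcal{H}^{2}=(p)$ it must have order exactly $2$.

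The substantive work lies in case~(2)(i), where $N(\varepsilon_d)=1$ and the SFU is $\{\varepsilon_d\}$. For $\varepsilon=\varepsilon_d$, writing $\alpha=a+bi$ with $a,b\in\QQ(\sqrt d)$ forces $b=0$ (the option $a=0$ is ruled out by positivity), so $a^{2}=p\varepsilon_d$; writing $a=u+v\sqrt d$ and combining $u^{2}-dv^{2}=\pm p$ with $u^{2}+dv^{2}=px$ yields $(2u)^{2}=2p(x\pm 1)$. For $\varepsilon=i\varepsilon_d$, one gets $a=\pm b$ and then $2a^{2}=p\varepsilon_d$, which by an analogous calculation produces $(2u)^{2}=p(x\pm 1)$ in $\NN$. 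The converses are immediate: a square representation of $p(x\pm 1)$ or $2p(x\pm 1)$ reverses the construction and exhibits a generator of $\mathcal{H}$. The main obstacle in the whole argument is the book-keeping through units modulo squares and the careful tracking of $\pm$ signs, so that the two conditions $p(x\pm 1)$ and $2p(x\pm 1)$ emerge separately from the cases $\varepsilon=i\varepsilon_d$ and $\varepsilon=\varepsilon_d$.
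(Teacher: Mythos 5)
Your proof is correct and takes essentially the approach the paper intends: the paper gives no argument for Proposition~\ref{6} beyond ``proceed as in Proposition~8 of \cite{Az-03}'', and both that reference and the paper's own Proposition~\ref{8} argue exactly as you do, writing $\alpha^2=p\varepsilon$ and running through the unit classes modulo squares ($\{1,i,\varepsilon_d,i\varepsilon_d\}$, plus $\sqrt{i\varepsilon_d},\,i\sqrt{i\varepsilon_d}$ when $Q=2$) with norm arguments to $\QQ(\sqrt d)$, $\QQ(i)$ and $\QQ$. One cosmetic slip only: in case (1) the subcase $\varepsilon=i\varepsilon_d$ reduces to $\pm 2p\varepsilon_d$ (not $\pm p\varepsilon_d$) being a square in $\QQ(\sqrt d)$, but the same contradiction (its norm $-4p^2<0$ cannot be a square's norm) applies, so the conclusion stands.
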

On peut alors citer le résultat:
\begin{them}\label{10}
Soient $\kk=\K$ où $d=2p_1p_2$ avec $p_1$ et $p_2$ sont deux nombres  premiers tels
que $p_1\equiv p_2\equiv1\pmod 4$. Posons $p_1=\pi_1\pi_2$,
où $\pi_1$ et $\pi_2$ sont dans $\ZZ[i]$, notons par $\mathcal{H}_0$,
$\mathcal{H}_1$ et $\mathcal{H}_2$ les idéaux de $\kk$ au dessus de $1+i$, $\pi_1$ et $\pi_2$ respectivement. Si la norme de l'unité fondamentale de $\QQ(\sqrt d)$ est égale à -1, alors le groupe engendré par les
classes de $\mathcal{H}_0$, $\mathcal{H}_1$ et $\mathcal{H}_2$ est de type $(2, 2, 2)$.
\end{them}
\begin{proof}
 Les nombres $\pi_1$et $\pi_2$   sont des premiers ramifiés dans $\kk/\QQ(i)$, alors ils existent des idéaux premiers $\mathcal{H}_1$ et $\mathcal{H}_2$ de $\kk$ tels que: $\pi_j\OO_\kk=(\pi_j)=\mathcal{H}_j^2$,  ($j\in\{1,2\}$),
où $\OO_{\kk}$ est l'anneau des entiers du corps $\kk$. D'autre part $2$ est totalement ramifié dans $\kk$, donc il existe  un idéal premier $\mathcal{H}_0$ de $\kk$ tel que $\mathcal{H}_0^2=(1+i)\OO_\kk$.\\
\indent Comme  la norme de l'unité fondamentale de $\QQ(\sqrt d)$ est égale à -1, alors $Q$, l'indice d'unités de $\kk$, est égal à 1. On a  $\mathcal{H}_0^2=(1+i)$, $\mathcal{H}_1^2=(e+2if)$ et $\mathcal{H}_2^2=(e-2if)$, or $\sqrt2\not\in\QQ(\sqrt d)$ et $\sqrt{e^2+(\pm2f)^2}=\sqrt p_1\not\in\QQ(\sqrt d)$, alors d'après la proposition (\ref{8}) $\mathcal{H}_0$, $\mathcal{H}_1$ et $\mathcal{H}_2$ sont d'ordre $2$ dans $\kk$. De plus $(\mathcal{H}_1\mathcal{H}_2)^2=(p_1)$, alors d'après la proposition (\ref{6}) l'ideal $\mathcal{H}_1\mathcal{H}_2$ est d'ordre $2$ dans $\kk$, d'autre part on a $(\mathcal{H}_0 \mathcal{H}_1)^2=((e-2f)+i(e+2f))$ et $(\mathcal{H}_0 \mathcal{H}_2)^2=((e+2f)+i(e-2f))$ et $\sqrt{(e-2f)^2+(e+2f)^2}=\sqrt{2p_1}\not\in\QQ(\sqrt d)$, donc d'après la proposition (\ref{8}) $\mathcal{H}_0 \mathcal{H}_1$ et $\mathcal{H}_0 \mathcal{H}_2$ sont d'ordre $2$ dans $\kk$; enfin $(\mathcal{H}_0 \mathcal{H}_1 \mathcal{H}_2)^2=((1+i)\pi_1\pi_2)=(p_1+ip_1)$ et comme $\sqrt{p_1^2+p_1^2}=p_1\sqrt2\not\in\QQ(\sqrt d)$, alors $\mathcal{H}_0 \mathcal{H}_1 \mathcal{H}_2$ est d'ordre $2$ dans $\kk$. Et ceci montre que le sous-groupe du 2-groupe de classes de $\kk$, engendré par les classes de $\mathcal{H}_0$, $\mathcal{H}_1$ et $\mathcal{H}_2$, est de type $(2, 2, 2)$.
\end{proof}
\begin{coro}
Gardons les hypothèses du théorème précèdent et notons par $\mathbf{C}_{\mathds{\kk},2}$ la 2-partie du groupe de
 classes de $\kk$. Si $\mathbf{C}_{\mathds{\kk},2}$ est de type $(2, 2, 2)$, alors au moins deux des éléments \{$(\frac{p_1}{p_2})$, $(\frac{2}{p_1})$, $(\frac{2}{p_2})$\} valent -1. Dans ce cas on a: $\mathbf{C}_{\mathds{\kk},2}=\langle[\mathcal{H}_0], [\mathcal{H}_1],  [\mathcal{H}_2]\rangle$
\end{coro}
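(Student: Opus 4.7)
The plan is to split the corollary into two claims of very different difficulty. The generator identity $\mathbf{C}_{\kk,2}=\langle[\mathcal{H}_0], [\mathcal{H}_1], [\mathcal{H}_2]\rangle$ is a near-immediate consequence of Théorème \ref{10}: once one knows that at least two of the Legendre symbols equal $-1$, Lemme \ref{4} gives $N_{\QQ(\sqrt d)/\QQ}(\varepsilon_d) = -1$, and the theorem then exhibits $\langle[\mathcal{H}_0], [\mathcal{H}_1], [\mathcal{H}_2]\rangle$ as a subgroup of $\mathbf{C}_{\kk,2}$ of type $(2,2,2)$, hence of order $8$; by assumption $\mathbf{C}_{\kk,2}$ has the same order, so the two groups coincide.

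For the substantive part -- that the $(2,2,2)$ structure forces at least two of the three symbols to equal $-1$ -- I would argue by contraposition. Suppose fewer than two of $(p_1/p_2)$, $(2/p_1)$, $(2/p_2)$ equal $-1$, so either all three equal $+1$ or exactly one equals $-1$. In each configuration, a case-by-case application of genus theory to $\kk/\QQ$, combined with the description of the genus field $\k = \QQ(\sqrt 2, \sqrt{p_1}, \sqrt{p_2}, i)$ and the splitting of $2, p_1, p_2$ in $\kk$, should show that the $2$-rank of $\mathbf{C}_{\kk,2}/\mathbf{C}_{\kk,2}^2$ is at most $2$, arising entirely from the genus quotient. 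Consequently $\mathbf{C}_{\kk,2}$ could not be of type $(2,2,2)$, contradicting the hypothesis. In practice one either invokes the converse direction of \cite{AzTa08} or redoes the rank computation via the ambiguous class number formula, controlling the unit index $Q$ by Lemme \ref{3}.

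The main obstacle is precisely this rank computation in the bad cases: one must verify that when the Legendre symbol condition fails, no non-genus $2$-class arises to raise the rank back to $3$. This is where the detailed behaviour of the ramified primes $1+i$, $\pi_1$, $\pi_2$ in $\kk/\QQ(i)$, and the consequent change in the unit index, become central. The generator identity, by contrast, reduces to a counting argument once this harder half is in hand.
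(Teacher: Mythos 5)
Your first half (the generator identity) is essentially the paper's own argument: once at least two of the symbols equal $-1$, Lemme (\ref{4}) gives $N(\varepsilon_d)=-1$, Theorem (\ref{10}) exhibits $\langle[\mathcal{H}_0],[\mathcal{H}_1],[\mathcal{H}_2]\rangle$ as a subgroup of type $(2,2,2)$, hence of order $8$, and equality with $\mathbf{C}_{\kk,2}$ follows by comparing orders. For the other implication, note that the paper proves nothing new: it simply quotes \cite{AzTa08}, where it is shown that $\mathbf{C}_{\kk,2}$ is of type $(2,2,2)$ if and only if at least two of $(\frac{p_1}{p_2})$, $(\frac{2}{p_1})$, $(\frac{2}{p_2})$ equal $-1$. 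So your fallback option (invoking the converse direction of \cite{AzTa08}) is exactly what the paper does.

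Your primary plan for that converse, however, would fail. You propose to show by contraposition that when fewer than two symbols equal $-1$ the $2$-rank of $\mathbf{C}_{\kk,2}$ drops to at most $2$. It does not. Apply Chevalley's ambiguous class number formula to the quadratic extension $\kk/\QQ(i)$ (whose base has class number $1$ and units $\langle i\rangle$): the $2$-rank of $\mathbf{C}_{\kk,2}$ is $t-1-e$, where $t=5$ is the number of ramified primes of $\QQ(i)$ in $\kk$ (namely $1+i$, since $2$ is totally ramified in $\kk/\QQ$, together with $\pi_1,\pi_2,\pi_3,\pi_4$, because $p_1\equiv p_2\equiv 1\pmod 4$ split in $\QQ(i)$ and divide $d$), and $2^e=[\langle i\rangle:\langle i\rangle\cap N_{\kk/\QQ(i)}(\kk^{\times})]\leq 2$ since $-1=N_{\kk/\QQ(i)}(i)$ is always a norm. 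Hence the rank is $3$ or $4$ for every pair $p_1,p_2\equiv 1\pmod 4$, independently of the Legendre symbols; when the symbol condition fails, $\mathbf{C}_{\kk,2}$ is strictly larger than $(2,2,2)$ (an element of order $4$ appears, or the rank is $4$), not of smaller rank. So a genus-theoretic $2$-rank computation cannot detect the failure; one needs control of the order or the $4$-rank of $\mathbf{C}_{\kk,2}$, which is precisely the finer determination carried out in \cite{AzTa08}. As written, your "main obstacle" paragraph rests on this incorrect expectation, so the citation route is not merely a convenience but the only one of your two options that works without substantial additional argument.
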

\begin{proof}
D'après le théorème (\ref{10}) on a $\langle[\mathcal{H}_0], [\mathcal{H}_1],  [\mathcal{H}_2]\rangle$ est de type $(2, 2, 2)$, par suite $\mathbf{C}_{\mathds{\kk},2}=\langle[\mathcal{H}_0], [\mathcal{H}_1],  [\mathcal{H}_2]\rangle$, or d'après \cite{AzTa08} $\mathbf{C}_{\mathds{\kk},2}$ est de type $(2, 2, 2)$ si et seulement si deux au moins  des éléments \{$(\frac{p_1}{p_2})$, $(\frac{2}{p_1})$, $(\frac{2}{p_2})$\} valent -1.
\end{proof}
 \section{\textbf{Capitulation dans les extensions $\KK_1$, $\KK_2$ et $\KK_3$}}
\subsection{\textbf{Capitulation dans les extensions $\KK_1$ et $\KK_2$}}
Commençons cette partie par rappeler quelques résultats que nous serons utiles dans la suite.\\
\indent Soient m et n deux entiers naturels différents et sans facteurs carrés, $\varepsilon_1$ (resp $\varepsilon_2$, $\varepsilon_3$) l'unité fondamentale de $\QQ(\sqrt m)$ (resp $\QQ(\sqrt n)$, $\QQ(\sqrt {mn})$). On suppose que $\varepsilon_1$, $\varepsilon_2$ et $\varepsilon_3$ sont de norme -1. On pose $K_0=\QQ(\sqrt m,\sqrt n)$ et $K=K_0(i)$.
\begin{propo}[\cite{Az-05}] \label{18}
\indent 1) Si $\varepsilon_1\varepsilon_2\varepsilon_3$ est un carré dans  $K_0$, alors $\{\varepsilon_1, \varepsilon_2,\sqrt{\varepsilon_1\varepsilon_2\varepsilon_3}\}$ est un SFU de $K_0$.\\
\indent 2) sinon \{$\varepsilon_1$, $\varepsilon_2$, $\varepsilon_3$\} est un SFU de $K_0.$
\end{propo}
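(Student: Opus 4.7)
Puisque $K_0=\QQ(\sqrt m,\sqrt n)$ est totalement r\'eel de degr\'e $4$, le rang des unit\'es de $K_0$ vaut $3$ et les racines de l'unit\'e contenues dans $K_0$ se r\'eduisent \`a $\{\pm 1\}$. Le plan consiste \`a calculer l'indice $q=[E_{K_0}:\{\pm 1\}\langle\varepsilon_1,\varepsilon_2,\varepsilon_3\rangle]$, qui est connu pour \^etre une puissance de $2$ divisant $4$, puis \`a d\'ecrire un SFU dans chaque cas.

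Je commencerais par v\'erifier l'ind\'ependance multiplicative de $\{\varepsilon_1,\varepsilon_2,\varepsilon_3\}$ modulo $\{\pm 1\}$. En notant $\mathrm{Gal}(K_0/\QQ)=\langle\sigma,\tau\rangle$ avec $\sigma$ fixant $\QQ(\sqrt m)$ et $\tau$ fixant $\QQ(\sqrt n)$, l'hypoth\`ese $N(\varepsilon_i)=-1$ donne $\tau(\varepsilon_1)=-\varepsilon_1^{-1}$, $\sigma(\varepsilon_2)=-\varepsilon_2^{-1}$ et $\sigma(\varepsilon_3)=-\varepsilon_3^{-1}$; toute relation non triviale $\varepsilon_1^a\varepsilon_2^b\varepsilon_3^c=\pm 1$ se r\'eduit alors, apr\`es application de $\sigma$ ou $\tau$ et passage au quotient, \`a une \'egalit\'e du type $\varepsilon_i^{2k}=\pm 1$ dans un sous-corps quadratique, ce qui impose $k=0$.

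L'\'etape cl\'e est l'analyse des signatures. En choisissant $\varepsilon_i>1$, on a $\bar\varepsilon_i=-\varepsilon_i^{-1}<0$ dans l'autre plongement de $k_i$. Les quatre plongements r\'eels de $K_0$ correspondent aux choix de signes pour $\sqrt m$ et $\sqrt n$, et l'on v\'erifie que les signatures de $\varepsilon_1$, $\varepsilon_2$, $\varepsilon_3$ sont respectivement $(+,+,-,-)$, $(+,-,+,-)$ et $(+,-,-,+)$. Un examen cas par cas des produits non triviaux $\pm\varepsilon_1^a\varepsilon_2^b\varepsilon_3^c$ montre alors que seul $\varepsilon_1\varepsilon_2\varepsilon_3$ est totalement positif; comme la positivit\'e totale est n\'ecessaire pour qu'un \'el\'ement soit un carr\'e dans $K_0$, c'est l'unique candidat possible dans $\{\pm 1\}\langle\varepsilon_1,\varepsilon_2,\varepsilon_3\rangle$.

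L'obstacle principal est de justifier que le carr\'e de toute unit\'e de $K_0$ appartient \`a $\{\pm 1\}\langle\varepsilon_1,\varepsilon_2,\varepsilon_3\rangle$, i.e.\ que $q$ divise $4$. Ce r\'esultat classique de Kubota--Kuroda s'obtient par une analyse cohomologique de l'action de $\mathrm{Gal}(K_0/\QQ)$ sur $E_{K_0}/E_{K_0}^2$, en utilisant que $E_{k_j}=\{\pm 1\}\langle\varepsilon_j\rangle$ pour chaque sous-corps quadratique $k_j$. Combin\'e avec le paragraphe pr\'ec\'edent, on d\'eduit $q\in\{1,2\}$, avec $q=2$ si et seulement si $\varepsilon_1\varepsilon_2\varepsilon_3$ est un carr\'e dans $K_0$. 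Dans ce cas, posant $\eta=\sqrt{\varepsilon_1\varepsilon_2\varepsilon_3}\in K_0$, l'ind\'ependance multiplicative de $\{\varepsilon_1,\varepsilon_2,\eta\}$ s'obtient en \'elevant toute relation \'eventuelle au carr\'e, et l'on conclut que $\{\varepsilon_1,\varepsilon_2,\eta\}$ est un SFU de $K_0$; sinon, $\{\varepsilon_1,\varepsilon_2,\varepsilon_3\}$ est lui-m\^eme un SFU.
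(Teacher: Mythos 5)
Le papier ne d\'emontre pas cette proposition~: elle y est cit\'ee de \cite{Az-05} (c'est le r\'esultat classique de type Kubota sur les unit\'es des corps biquadratiques r\'eels), il n'y a donc pas de preuve interne \`a comparer. Votre plan reconstruit pour l'essentiel cette d\'emonstration classique et il est correct dans ses grandes lignes~: ind\'ependance de $\varepsilon_1,\varepsilon_2,\varepsilon_3$, r\'eduction au groupe $\langle -1,\varepsilon_1,\varepsilon_2,\varepsilon_3\rangle$, puis \'elimination de tous les produits autres que $\varepsilon_1\varepsilon_2\varepsilon_3$ comme carr\'es possibles, ce qui donne exactement les deux cas de l'\'enonc\'e. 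Deux remarques pour le compl\'eter et le rapprocher du style de l'article. D'une part, votre analyse des signatures (seule la classe de $\varepsilon_1\varepsilon_2\varepsilon_3$ est totalement positive modulo les carr\'es) est exacte, mais elle peut \^etre remplac\'ee par l'argument de normes utilis\'e ailleurs dans le papier (preuve de la proposition (\ref{22}))~: si un produit $\varepsilon_1^a\varepsilon_2^b\varepsilon_3^c$ autre que $\varepsilon_1\varepsilon_2\varepsilon_3$ \'etait un carr\'e $\alpha^2$ dans $K_0$, une norme $N_{K_0/k_j}$ convenable, jointe \`a $N(\varepsilon_i)=-1$, donnerait que $-1$ est un carr\'e dans un sous-corps quadratique r\'eel, ce qui est absurde (les produits affect\'es du signe $-$ \'etant \'ecart\'es par la positivit\'e, comme chez vous)~; c'est plus court et c'est l'esprit de \cite{Az-05}. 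D'autre part, le point que vous pr\'esentez comme l'obstacle principal et renvoyez \`a une analyse cohomologique est en fait \'el\'ementaire~: pour $u\in E_{K_0}$, les \'el\'ements $u^{1+\sigma}$, $u^{1+\tau}$, $u^{1+\sigma\tau}$ sont des unit\'es des trois sous-corps quadratiques et $u^2\,N_{K_0/\QQ}(u)=u^{1+\sigma}\,u^{1+\tau}\,u^{1+\sigma\tau}$, d'o\`u $u^2\in\langle -1,\varepsilon_1,\varepsilon_2,\varepsilon_3\rangle$~; c'est tout ce dont vous avez besoin, et l'affirmation auxiliaire selon laquelle l'indice $q$ divise $4$ est inutile (et \`a manier avec prudence~: la borne a priori fournie par ce lemme est $q\mid 8$), votre argument de positivit\'e la ramenant de toute fa\c{c}on \`a $q\in\{1,2\}$, avec $q=2$ si et seulement si $\varepsilon_1\varepsilon_2\varepsilon_3$ est un carr\'e dans $K_0$, ce qui conclut comme vous l'indiquez.
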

\begin{propo}[\cite{Az-05}]\label{19}
On note par $Q$ l'indice d'unité de $K$, alors on a:\\
\indent 1) Si $\{\varepsilon_1, \varepsilon_2,\sqrt{\varepsilon_1\varepsilon_2\varepsilon_3}\}$ est un SFU de $K_0$, alors $Q=1$.\\
\indent 2) Si \{$\varepsilon_1$, $\varepsilon_2$, $\varepsilon_3$\} est un SFU de $K_0$, alors $Q=2$ ssi $2\varepsilon_1\varepsilon_2\varepsilon_3$ est un carré dans $K_0$.\\
\indent 3) Si $Q=2$, alors $\{\varepsilon_1, \varepsilon_2,\sqrt{i\varepsilon_1\varepsilon_2\varepsilon_3}\}$ est un SFU de $K.$
\end{propo}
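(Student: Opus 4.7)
L'approche consistera \`a utiliser la caract\'erisation classique de l'indice d'unit\'e pour une extension quadratique CM. Comme $K_0$ est totalement r\'eel et $K = K_0(i)$, on a $Q = [E_K : W_K E_{K_0}] \in \{1, 2\}$, avec $W_K = \langle i \rangle$. On aura $Q = 2$ si et seulement s'il existe $\eta \in E_K$ v\'erifiant $\eta^2 = \zeta u$ pour un $\zeta \in \{\pm 1, \pm i\}$ et un $u \in E_{K_0}$, mais $\eta \notin W_K E_{K_0}$. L'essentiel du travail consistera \`a \'enum\'erer les valeurs possibles de $u$ modulo carr\'es dans $E_{K_0}$, en utilisant les SFU fournis par la Proposition \ref{18}, et \`a d\'ecider dans chaque cas si $\sqrt{\zeta u} \in K$.

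Pour le point (1), on suppose que $\{\varepsilon_1, \varepsilon_2, \sqrt{\varepsilon_1\varepsilon_2\varepsilon_3}\}$ est un SFU de $K_0$, et on \'enum\`ere les produits non carr\'es $u$ de ces trois g\'en\'erateurs. Pour chaque couple $(u, \zeta)$, on applique les normes $N_{K/F}$ vers les sous-corps $F = \QQ(\sqrt m, i)$, $\QQ(\sqrt n, i)$, $\QQ(\sqrt{mn}, i)$ et $\QQ(i)$. L'hypoth\`ese $N(\varepsilon_j) = -1$ pour $j = 1, 2, 3$, jointe au fait que $\sqrt{\varepsilon_1\varepsilon_2\varepsilon_3}$ est d\'ej\`a dans $K_0$, permet d'\'eliminer toutes les configurations non triviales, d'o\`u $Q = 1$.

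Pour le point (2), on suppose que $\{\varepsilon_1, \varepsilon_2, \varepsilon_3\}$ est un SFU de $K_0$. La m\^eme analyse par les normes \'elimine toutes les combinaisons sauf celle conduisant \`a $\sqrt{i\varepsilon_1\varepsilon_2\varepsilon_3}$. Cette derni\`ere appartient \`a $K = K_0(i)$ si et seulement si on peut l'\'ecrire $a+bi$ avec $a, b \in K_0$; en d\'eveloppant $(a+bi)^2 = i\varepsilon_1\varepsilon_2\varepsilon_3$, on obtient $a^2 = b^2$ et $2ab = \varepsilon_1\varepsilon_2\varepsilon_3$, ce qui force $2\varepsilon_1\varepsilon_2\varepsilon_3$ (\`a signe pr\`es) \`a \^etre un carr\'e dans $K_0$. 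R\'eciproquement, si $2\varepsilon_1\varepsilon_2\varepsilon_3 = \alpha^2$ avec $\alpha \in K_0$, alors $\eta = \frac{(1+i)\alpha}{2}$ fournit une unit\'e de $K$ hors de $W_K E_{K_0}$, donc $Q = 2$.

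Pour le point (3), si $Q = 2$, l'analyse pr\'ec\'edente donne $\eta = \sqrt{i\varepsilon_1\varepsilon_2\varepsilon_3} \in K$, et l'ensemble $\{\varepsilon_1, \varepsilon_2, \eta\}$ engendre avec $W_K$ tout $E_K$, car il est de rang $3$ par le th\'eor\`eme des unit\'es de Dirichlet et l'indice $[W_K E_{K_0} : \langle W_K, \varepsilon_1, \varepsilon_2, \eta^2\rangle]$ vaut $2$. L'obstacle principal sera l'\'enum\'eration syst\'ematique dans les points (1) et (2), n\'ecessitant de manipuler plusieurs cha\^ines de normes et d'exploiter finement que les $\varepsilon_j$ ne sont ni des carr\'es individuellement, ni des produits carr\'es entre eux modulo $(K_0^\times)^2$.
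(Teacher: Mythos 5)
This proposition is quoted by the paper from \cite{Az-05}; no proof is given here, so your text can only be judged on its own. Your framework is the right (and standard) one: $Q=[E_K:W_KE_{K_0}]\in\{1,2\}$, and $Q=2$ exactly when some unit $u$ of $K_0$ has $iu$ a square in $K$, equivalently $2u$ a square in $K_0$, after which one eliminates the classes $u=\varepsilon_1^a\varepsilon_2^b\varepsilon_3^c$ modulo $(K_0^\times)^2$ by norm descent. But as written this is a programme, not a proof: the step you yourself call ``l'essentiel du travail'' -- the elimination in (1) and (2) -- is only asserted. To close it you must actually compute, for instance, $N_{K_0/\QQ(\sqrt m)}(\varepsilon_2)=N_{K_0/\QQ(\sqrt m)}(\varepsilon_3)=-1$ and $N_{K_0/\QQ(\sqrt m)}\bigl(\sqrt{\varepsilon_1\varepsilon_2\varepsilon_3}\bigr)=\pm\varepsilon_1$, and use that neither $-1$ nor $\pm\varepsilon_1$ (of norm $-1$) can be a square in a real quadratic field; the three norm maps then force $a\equiv b\equiv c\pmod 2$, leaving only $u=1$ and $u=\varepsilon_1\varepsilon_2\varepsilon_3$. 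The class $u=1$ must be excluded by $\sqrt2\notin K_0$ (true for $m=p$, $n=2q$, but you never invoke it, and without it point (1) is not even clear), and $u=\varepsilon_1\varepsilon_2\varepsilon_3$ is precisely the criterion of (2); it is incompatible with $\sqrt{\varepsilon_1\varepsilon_2\varepsilon_3}\in K_0$, since both together would again give $\sqrt2\in K_0$ -- that, and not a vague ``analyse par les normes'', is what proves (1).

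Two further concrete slips. In (2), $(a+bi)^2=i\varepsilon_1\varepsilon_2\varepsilon_3$ yields $\pm2\varepsilon_1\varepsilon_2\varepsilon_3=(2a)^2$; to discard the minus sign you need $\varepsilon_1\varepsilon_2\varepsilon_3$ totally positive, which holds because the three norms are $-1$ (the two sign changes under each embedding of $K_0$ cancel) -- ``\`a signe pr\`es'' leaves this open. In (3), the index you invoke is wrong: $\langle W_K,\varepsilon_1,\varepsilon_2,\eta^2\rangle=\langle i,\varepsilon_1,\varepsilon_2,\varepsilon_3\rangle=W_KE_{K_0}$, so that index is $1$, not $2$. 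The correct argument is that $\langle W_K,\varepsilon_1,\varepsilon_2,\eta\rangle$ contains $W_KE_{K_0}$ (because $\varepsilon_3=-i\,\eta^2\varepsilon_1^{-1}\varepsilon_2^{-1}$) and contains it properly (because $\eta/\bar\eta=\pm i$, so $\eta\notin W_KE_{K_0}$), hence equals $E_K$ since $Q=2$; one should also note that $Q=2$ forces, via (1) and Proposition \ref{18}, the SFU of $K_0$ to be $\{\varepsilon_1,\varepsilon_2,\varepsilon_3\}$, so that (2) applies and $\eta=\sqrt{i\varepsilon_1\varepsilon_2\varepsilon_3}$ indeed lies in $K$.
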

\begin{lem}[\cite{Az-00}] \label{20}
Soient $p$ un nombre premier impair et $\varepsilon=x+y\sqrt{2p}$ l'unité fondamentale de $\QQ(\sqrt{2p})$. On suppose que $\varepsilon$ est de norme 1. Alors $x\pm1$ est un carré dans $\NN$ et $2\varepsilon$ est un carré dans $\QQ(\sqrt{2p})$.
\end{lem}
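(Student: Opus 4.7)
L'approche consistera à partir de l'identité $x^2 - 2py^2 = 1$ donnée par $N(\varepsilon) = 1$, réécrite sous la forme $(x-1)(x+1) = 2py^2$. La parité impose que $x$ soit impair (sinon $x^2 - 2py^2$ serait pair, donc ne pourrait valoir $1$), d'où $y$ est pair; posons $y = 2w$. Les entiers $u = (x-1)/2$ et $v = (x+1)/2$ sont alors consécutifs, donc premiers entre eux, et vérifient $uv = 2pw^2$.

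Par coprimalité et unicité de la factorisation, les facteurs premiers $2$ et $p$ se répartissent de manière unique entre $u$ et $v$, ce qui conduit à quatre cas regroupés en deux familles: soit $\{x-1,\,x+1\} = \{4a^2,\,2pb^2\}$, soit $\{x-1,\,x+1\} = \{2a^2,\,4pb^2\}$, avec $y = 2ab$ dans les deux situations.

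L'étape clé, et selon moi la plus délicate, sera d'éliminer la seconde famille en invoquant le caractère fondamental de $\varepsilon$. Dans le sous-cas $x-1 = 2a^2$, $x+1 = 4pb^2$, je poserais $\eta = a + b\sqrt{2p}$; un calcul direct donne $\eta^2 = a^2 + 2pb^2 + 2ab\sqrt{2p} = x + y\sqrt{2p} = \varepsilon$, avec $N(\eta) = a^2 - 2pb^2 = -1$. Ainsi $\eta$ serait une unité de $\QQ(\sqrt{2p})$ vérifiant $\eta^2 = \varepsilon$, en contradiction avec la minimalité de $\varepsilon$. Le sous-cas symétrique $x-1 = 4pb^2$, $x+1 = 2a^2$ se traite de manière analogue, avec cette fois $N(\eta) = +1$.

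Il ne restera donc que la première famille, dans laquelle l'un des entiers $x \pm 1$ vaut $(2a)^2$, ce qui établit la première assertion: $x\pm 1$ est un carré dans $\NN$. Pour la seconde assertion, il suffira d'observer l'identité $(2a + b\sqrt{2p})^2 = 4a^2 + 2pb^2 + 4ab\sqrt{2p} = (x-1) + (x+1) + 2y\sqrt{2p} = 2\varepsilon$, valable dans les deux sous-cas admissibles, qui montre que $2\varepsilon$ est un carré dans $\QQ(\sqrt{2p})$.
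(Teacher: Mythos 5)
Votre preuve est correcte et compl\`ete; notez d'abord que l'article ne d\'emontre pas ce lemme, il le cite de \cite{Az-00}, donc il n'y a pas de preuve interne \`a comparer. Votre argument est celui que l'on attend pour ce type d'\'enonc\'e : factorisation $(x-1)(x+1)=2py^2$ avec $u=(x-1)/2$ et $v=(x+1)/2$ premiers entre eux, ce qui ne laisse que les deux familles de d\'ecompositions que vous indiquez; l'\'elimination de la famille $\{x-1,x+1\}=\{2a^2,4pb^2\}$ en exhibant $\eta=a+b\sqrt{2p}$ avec $\eta^2=\varepsilon$ (contredisant le caract\`ere fondamental de $\varepsilon$) est exacte dans les deux sous-cas, et l'identit\'e finale $(2a+b\sqrt{2p})^2=2\varepsilon$ donne bien les deux assertions simultan\'ement. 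Un seul point \`a expliciter : la parit\'e de $y$ ne d\'ecoule pas seulement de l'imparit\'e de $x$ \og par parit\'e \fg, mais de la congruence $x^2\equiv1\pmod 8$ pour $x$ impair, qui donne $8\mid 2py^2$, donc $4\mid y^2$ et $y$ pair; c'est une ligne, mais elle est n\'ecessaire pour poser $y=2w$. Sous cette r\'eserve mineure, rien \`a redire.
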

\begin{propo}\label{21}
On garde les notations et les hypothèses précédentes et désignant par $p$ et $q$ deux nombres premiers congrus à $1\pmod 4$. Posons $m=p$ et $n=2q$. Alors  $\{\varepsilon_1, \varepsilon_2,\sqrt{\varepsilon_1\varepsilon_2\varepsilon_3}\}$ ou $\{\varepsilon_1, \varepsilon_2,\sqrt{i\varepsilon_1\varepsilon_2\varepsilon_3}\}$ est SFU de $K$.
\end{propo}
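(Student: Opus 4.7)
Le résultat se déduit des Propositions~\ref{18} et~\ref{19} pourvu que l'on établisse que, sous nos hypothèses, l'un au moins des deux éléments $\varepsilon_1\varepsilon_2\varepsilon_3$ et $2\varepsilon_1\varepsilon_2\varepsilon_3$ est un carré dans $K_0$. En effet, si $\varepsilon_1\varepsilon_2\varepsilon_3\in(K_0^\times)^2$, alors la Proposition~\ref{18}(1) fournit le SFU $\{\varepsilon_1,\varepsilon_2,\sqrt{\varepsilon_1\varepsilon_2\varepsilon_3}\}$ de $K_0$ et la Proposition~\ref{19}(1) donne $Q=1$, ce même ensemble étant donc aussi un SFU de $K$~; c'est la première alternative. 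Sinon $\{\varepsilon_1,\varepsilon_2,\varepsilon_3\}$ est un SFU de $K_0$ (Proposition~\ref{18}(2)), et les points (2) et (3) de la Proposition~\ref{19} ramènent la seconde alternative à l'appartenance $2\varepsilon_1\varepsilon_2\varepsilon_3\in(K_0^\times)^2$.

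Pour démontrer celle-ci, j'écrirais $\varepsilon_1=a+b\sqrt p$, $\varepsilon_2=x+y\sqrt{2q}$ et $\varepsilon_3=c+e\sqrt{2pq}$, les conditions de norme $-1$ donnant les relations de Pell $a^2-pb^2=-1$, $x^2-2qy^2=-1$ et $c^2-2pqe^2=-1$. Les congruences $p\equiv q\equiv 1\pmod 4$ contraignent les parités~: $a$ est pair et $b,x,y,c,e$ sont impairs. La relation $\varepsilon_j\varepsilon_j'=-1$ entraîne $\varepsilon_j+\varepsilon_j^{-1}=\varepsilon_j-\varepsilon_j'$, d'où les identités $(1+\varepsilon_1)^2=2\varepsilon_1(1+b\sqrt p)$, $(1+\varepsilon_2)^2=2\varepsilon_2(1+y\sqrt{2q})$ et $(1+\varepsilon_3)^2=2\varepsilon_3(1+e\sqrt{2pq})$. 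Leur produit donne
\[
 \bigl((1+\varepsilon_1)(1+\varepsilon_2)(1+\varepsilon_3)\bigr)^2 = 8\,\varepsilon_1\varepsilon_2\varepsilon_3\,(1+b\sqrt p)(1+y\sqrt{2q})(1+e\sqrt{2pq}),
\]
et la tâche se réduit alors à montrer que le facteur $(1+b\sqrt p)(1+y\sqrt{2q})(1+e\sqrt{2pq})$ est congru à $2$ modulo les carrés de $K_0^\times$~; c'est là qu'interviennent de façon non triviale les relations de Pell ci-dessus et les parités imposées par $p\equiv q\equiv 1\pmod 4$, via un contrôle des valuations $2$-adiques aux premiers ramifiés de $\mathbb{Q}(\sqrt{2q})$ et $\mathbb{Q}(\sqrt{2pq})$.

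L'obstacle principal est précisément cet argument de congruence modulo $(K_0^\times)^2$. Une fois acquis, on désigne par $\gamma\in K_0$ une racine carrée de $2\varepsilon_1\varepsilon_2\varepsilon_3$, et la relation $2i=(1+i)^2$ fournit $\sqrt{i\varepsilon_1\varepsilon_2\varepsilon_3}=\gamma/(1-i)\in K$~; la Proposition~\ref{19}(3) achève la démonstration en assurant que $\{\varepsilon_1,\varepsilon_2,\sqrt{i\varepsilon_1\varepsilon_2\varepsilon_3}\}$ est alors un SFU de $K$.
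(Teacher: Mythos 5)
Your framing of the problem is the right one: everything reduces, via Propositions \ref{18} and \ref{19}, to showing that at least one of $\varepsilon_1\varepsilon_2\varepsilon_3$ and $2\varepsilon_1\varepsilon_2\varepsilon_3$ is a square in $K_0$ (if neither is, then $Q=1$ with SFU $\{\varepsilon_1,\varepsilon_2,\varepsilon_3\}$ and the conclusion fails), and that final deduction is exactly how the paper concludes. But that dichotomy \emph{is} the content of the proposition, and your proposal does not prove it. After the (correct) identity $(1+\varepsilon)^2=2\varepsilon(1+b\sqrt p)$ for a norm $-1$ unit, you reduce to deciding the class of $X=(1+b\sqrt p)(1+y\sqrt{2q})(1+e\sqrt{2pq})$ modulo $(K_0^\times)^2$ and then explicitly defer this \emph{obstacle principal} to an unspecified control of $2$-adic valuations. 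A valuation count cannot settle it: each factor has rational norm $-a^2$, $-x^2$, $-c^2$ respectively, so the ideal-theoretic information it carries is essentially trivial, and nothing in your sketch uses the splitting of $p$ and $q$ in $\ZZ[i]$, which is where the real arithmetic input lies. There is also a slip in the reduction itself: from $8\varepsilon_1\varepsilon_2\varepsilon_3X\in(K_0^\times)^2$ one gets $\varepsilon_1\varepsilon_2\varepsilon_3\equiv 2X \pmod{(K_0^\times)^2}$, so $X\equiv 2$ would give the \emph{first} alternative, not the second one you were trying to establish at that point; what is actually needed is $X\equiv 1$ or $2$, i.e. the dichotomy again. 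Finally, you take $\varepsilon_1=a+b\sqrt p$ with integral $a,b$, whereas for $p\equiv 1\pmod 4$ the fundamental unit may be half-integral (e.g. $p=5$); the paper treats that case separately.

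For comparison, the paper proves the key fact constructively: writing $p=\pi_1\pi_2$ and $q=\pi_3\pi_4$ in $\ZZ[i]$ and factoring the norm equations (e.g. $(x-i)(x+i)=py^2$ with $\gcd(x-i,x+i)\mid 2$), it produces explicit radicals such as $\sqrt{2\pi_2\varepsilon_1}=y_1\sqrt p+y_2\pi_2$, and their analogues for $\varepsilon_2$ and $\varepsilon_3$ (equalities (\ref{13}), (\ref{14}), (\ref{15})); multiplying these shows that $\sqrt{\varepsilon_1\varepsilon_2\varepsilon_3}$ or $\sqrt{2\varepsilon_1\varepsilon_2\varepsilon_3}$ lies in $K_0$ (and not both, else $\sqrt 2\in K_0$), after which Propositions \ref{18} and \ref{19} conclude as you did. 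These explicit square roots are not incidental: they are reused in the proof of Theorem \ref{23} to identify which classes capitulate, so a repair of your argument should aim at something equally explicit rather than a mere existence statement.
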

\begin{proof}
Soient $\varepsilon_1$ (resp $\varepsilon_2$, $\varepsilon_3$) l'unité fondamentale de $\QQ(\sqrt{p})$ (resp $\QQ(\sqrt{2q})$, $\QQ(\sqrt{2pq})$), comme $p$ et $q$ sont congrus à 1 $\pmod 4$, alors ils existent $\pi_1$, $\pi_2$, $\pi_3$ et $\pi_4$ dans $\ZZ[i]$ tels que $p=\pi_1\pi_2$, $q=\pi_3\pi_4$, $\overline{\pi_1}=\pi_2$ et $\overline{\pi_3}=\pi_4$ (le conjugué complexe). Posons $\varepsilon_1=x+y\sqrt {p}$ l'unité fondamentale de $\QQ(\sqrt p)$, où $x$ et $y$ sont des entiers ou des semi-entiers.\\
 \indent (a) Supposons que $x$ et $y$ sont des entiers. Comme la norme de  $\varepsilon_1$ est égale $-1$, alors $(x-i)(x+i)=py^2$ et le pgcd de $x-i$ et $x+i$ est un diviseur de 2, donc il existent $y_1$ et $y_2$ dans $\ZZ[i]$ tels que $y=y_1y_2$ et\\
$\left\{\begin{array}{ll}
    x+i =y_1^2\pi_1\\
    x-i =y_2^2\pi_2
    \end{array}\right.
\text{   ou   } \left\{\begin{array}{ll}
    x+i =iy_1^2\pi_1\\
    x-i =-iy_2^2\pi_2.
    \end{array}\right.$\\
 Par suite $2x=y_1^2\pi_1+y_2^2\pi_2$ ou $2x=iy_1^2\pi_1-iy_2^2\pi_2$, il s'en suit alors que $2\varepsilon_1=(y_1\sqrt{\pi_1}+y_2\sqrt{\pi_2})^2$ ou $2\varepsilon_1=(y_1\sqrt{i\pi_1}+y_2\sqrt{-i\pi_2})^2$ d'où $\sqrt{2\varepsilon_1}=y_1\sqrt{\pi_1}+y_2\sqrt{\pi_2}$ ou $\sqrt{\varepsilon_1}=y_1(1+i)\sqrt{\pi_1}+y_2(1-i)\sqrt{\pi_2}$, on conclut que:
   \begin{equation}\label{13}\left.
   \begin{array}{ll}\sqrt{2\pi_1\varepsilon_1} = y_1\pi_1+y_2\sqrt{p},\ et  \\
    \sqrt{2\pi_2\varepsilon_1}=y_1\sqrt{p}+y_2\pi_2,
    \ ou \\
     \sqrt{\pi_1\varepsilon_1}=y_1(1+i)\pi_1+\\y_2(1-i)\sqrt{p}\ \ et\\
      \sqrt{\pi_2\varepsilon_1}=y_1(1+i)\sqrt{p}+\\y_2(1-i)\pi_2.
     \end{array}
   \right\}
\end{equation}
 \indent (b) Soit $\varepsilon_1=\frac{1}{2}(x+y\sqrt {p})$ avec $x$ et $y$ sont des entiers impairs, alors on a $(x-2i)(x+2i)=\pi_1\pi_2y^2$, on procède comme précédemment on trouve les mêmes résultats.\\
 \indent c) Posons de même $\varepsilon_2=\alpha+\beta\sqrt{2q}$ avec $\alpha$ et $\beta$ deux entiers, on trouve aussi que:\\
 \begin{equation}\label{14}
 \left.
 \begin{array}{ll}
\sqrt{2(1+i)\pi_3\varepsilon_2}=\beta_1(1+i)\pi_3+\\\beta_2\sqrt{2q}\ \text{  et  } \\ \sqrt{2(1-i)\pi_4\varepsilon_2}=\beta_1\sqrt{2q}+\\\beta_2(1-i)\pi_4  \text{  ou  }\\
\sqrt{(1+i)\pi_3\varepsilon_2}=\frac{1}{2}(\beta_1(1+i)(1\pm \\i)\pi_3+\beta_2(1\mp i)\sqrt{2q})  \text{  et  }\\ \sqrt{(1-i)\pi_4\varepsilon_2}=\frac{1}{2}(\beta_1(1\pm \\i)\sqrt{2q}+\beta_2(1-i)(1\mp i)\pi_4).
 \end{array}\right\}
\end{equation}
 \indent (d) On applique le même argument à  $\varepsilon_3$ on trouve aussi que:\\
  \begin{equation}\label{15}
 \left.
 \begin{array}{ll}
    \sqrt{2(1+i)\pi_1\pi_3\varepsilon_3}=y_1(1+i)\pi_1\pi_3\\+y_2\sqrt{2pq} \text{  et  }\\ \sqrt{2(1-i)\pi_2\pi_4\varepsilon_3}=y_1\sqrt{2pq}\\+y_2(1-i)\pi_2\pi_4\text{ ou }\\
    \sqrt{(1+i)\pi_1\pi_3\varepsilon_3}=\frac{1}{2}(y_1(1+i)\\(1\pm i)\pi_1\pi_3+y_2(1\mp i)\sqrt{2pq})
    \text{ et }\\ \sqrt{(1-i)\pi_2\pi_4\varepsilon_3}=\frac{1}{2}(y_1(1\pm i)\\\sqrt{2pq}+y_2(1\mp i)(1-i)\pi_2\pi_4).
 \end{array}\right\}
\end{equation}

En multipliant les résultats des égalités (\ref{13}), (\ref{14}) et (\ref{15}), on trouve que $\sqrt{\varepsilon_1\varepsilon_2\varepsilon_3}\in K_0$ ou bien $\sqrt{2\varepsilon_1\varepsilon_2\varepsilon_3}\in K_0$, remarquons enfin que  $\sqrt{\varepsilon_1\varepsilon_2\varepsilon_3}$ et $\sqrt{2\varepsilon_1\varepsilon_2\varepsilon_3}$ ne sont pas tous les deux dans $K_0$, sinon on aura $\sqrt 2\in K_0$ ce qui est faux. La suite est une déduction simple des propositions (\ref{18}) et (\ref{19}).
\end{proof}
\begin{propo}\label{22}
Gardons les hypothèses de la proposition (\ref{21}) et supposons que la norme de $\varepsilon_2$ est égale à $1$.  Alors  le SFU de $K_0$ est \{$\varepsilon_1$, $\varepsilon_2$, $\varepsilon_3$\} et celui de $K$ est \{$\varepsilon_1$, $\sqrt{i\varepsilon_2}$, $\varepsilon_3$\}.\\
\end{propo}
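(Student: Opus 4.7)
The approach is to first determine a SFU of the totally real biquadratic field $K_0$, and then lift to the CM extension $K=K_0(i)$ via the classical fact that the unit index $Q=[U_K:\langle i\rangle U_{K_0}]$ belongs to $\{1,2\}$, producing an explicit unit responsible for $Q=2$ by means of Lemma \ref{20}.

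\textbf{Step 1 (SFU of $K_0$).} Since $K_0$ is totally real, no negative element is a square; it thus suffices to decide which of the seven nontrivial products $u=\varepsilon_1^{a_1}\varepsilon_2^{a_2}\varepsilon_3^{a_3}$ ($a_j\in\{0,1\}$) is a square in $K_0$. Writing $F_1=\QQ(\sqrt{p})$, $F_2=\QQ(\sqrt{2q})$, $F_3=\QQ(\sqrt{2pq})$ for the three quadratic subfields, and using $N(\varepsilon_1)=N(\varepsilon_3)=-1$, $N(\varepsilon_2)=1$, a direct computation of the relative norms gives
\[
N_{K_0/F_1}(u)=(-1)^{a_3}\varepsilon_1^{2a_1},\quad N_{K_0/F_2}(u)=(-1)^{a_1+a_3}\varepsilon_2^{2a_2},\quad N_{K_0/F_3}(u)=(-1)^{a_1}\varepsilon_3^{2a_3}.
\]
For $u$ to be a square in $K_0$, each of these must be a square in the (real) field $F_j$, which forces $a_1=a_3=0$. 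The remaining candidate $u=\varepsilon_2$ is excluded by a Kummer argument: $\sqrt{\varepsilon_2}\in K_0$ would force $F_2(\sqrt{\varepsilon_2})=F_2(\sqrt{p})=K_0$, hence $\varepsilon_2=p\gamma^2$ in $F_2$, but $p$ is unramified in $F_2/\QQ$ (since $p\nmid 8q$), so $v_\pi(p)=1$ for any prime $\pi$ of $F_2$ above $p$, contradicting $v_\pi(p)\in 2\ZZ$. Hence $\{\varepsilon_1,\varepsilon_2,\varepsilon_3\}$ is a SFU of $K_0$.

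\textbf{Step 2 (SFU of $K$).} Because $K=K_0(i)$ is a CM field with maximal real subfield $K_0$, the classical CM-index bound gives $Q\in\{1,2\}$. Lemma \ref{20} furnishes $\eta\in F_2$ with $2\varepsilon_2=\eta^2$, and the identity $(1+i)^2=2i$ yields
\[
i\varepsilon_2=\tfrac{i}{2}\eta^2=\Bigl(\tfrac{(1+i)\eta}{2}\Bigr)^{\!2},
\]
so $\sqrt{i\varepsilon_2}=(1+i)\eta/2\in F_2(i)\subset K$, and it is a unit since $i\varepsilon_2$ is. If one had $\sqrt{i\varepsilon_2}=\zeta u$ with $\zeta\in\langle i\rangle$ and $u\in U_{K_0}$, squaring would give $i\varepsilon_2=\pm u^2$, which is impossible because the left-hand side is purely imaginary and nonzero while the right-hand side is real. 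Therefore $Q=2$ and $\{\varepsilon_1,\sqrt{i\varepsilon_2},\varepsilon_3\}$ generates $U_K$ modulo $\langle i\rangle$.

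\textbf{Main obstacle.} The technically delicate step is the exclusion of $\sqrt{\varepsilon_2}\in K_0$ in Step 1: the three norm tests to $F_1,F_2,F_3$ all yield squares and do not settle this case, so one must descend to ideal-valuation information about $p$ in $F_2$. The remaining parts---the explicit square-root construction via Lemma \ref{20} and the real-versus-imaginary obstruction---are routine once the candidate $\sqrt{i\varepsilon_2}$ has been identified.
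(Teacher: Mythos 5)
Your proof is correct, and its overall skeleton is the same as the paper's: first show that $\{\varepsilon_1,\varepsilon_2,\varepsilon_3\}$ is a SFU of $K_0$ by ruling out every nontrivial product being a square in $K_0$, then use the square root of $2\varepsilon_2$ supplied by Lemma \ref{20} to produce $\sqrt{i\varepsilon_2}$ and conclude for $K$. You diverge in two places. For the delicate case $u=\varepsilon_2$, the paper argues more cheaply: by Lemma \ref{20}, $2\varepsilon_2$ is already a square in $\QQ(\sqrt{2q})\subset K_0$, so $\sqrt{\varepsilon_2}\in K_0$ would give $\sqrt2\in K_0$, which is false; your Kummer/ramification argument at $p$ is also valid but does not exploit Lemma \ref{20} and costs a little extra bookkeeping (you should note explicitly that $\sqrt{\varepsilon_2}\notin F_2$, which holds because $\varepsilon_2$ is fundamental, so that $F_2(\sqrt{\varepsilon_2})$ really is the quadratic extension $K_0$). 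For the passage to $K$, the paper simply invokes the unit-index results of \cite{Az-05} ($2\varepsilon_2$ a square in $K_0$ yields $Q=2$ and the stated SFU), whereas you rederive that fact: the identity $i\varepsilon_2=\bigl((1+i)\eta/2\bigr)^2$ with $2\varepsilon_2=\eta^2$, the CM bound $Q\le 2$, and the real-versus-imaginary obstruction make the step self-contained, a gain in readability at the price of redoing a known computation. Both you and the paper rely implicitly on Kubota's theorem that a SFU of the real biquadratic field $K_0$ is obtained from $\{\varepsilon_1,\varepsilon_2,\varepsilon_3\}$ by at most adjoining square roots of such products, and on the fact that the roots of unity of $K$ reduce to $\langle i\rangle$; that level of implicitness matches the paper's own proof.
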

\begin{proof}
 Comme la norme de $\varepsilon_2$ est égale à 1, alors d'après le lemme (\ref{20}), $x\pm1$ est un carré dans $\NN$ et $2\varepsilon_2$ est un carré dans $\QQ(\sqrt{2q})$ ($\varepsilon_2=x+y\sqrt{2q}$), de plus
  $\varepsilon_2$ n'est pas un carré dans $K_0$, sinon on trouve que $\sqrt 2\in K_0$, ce qui est absurde.
  Comme $\varepsilon_1$ et $\varepsilon_2$ ont pour norme $-1$, alors elles ne sont pas des carrés dans $K_0$.
   De même $\varepsilon_1\varepsilon_2$, $\varepsilon_1\varepsilon_3$, $\varepsilon_2\varepsilon_3$ et $\varepsilon_1\varepsilon_2\varepsilon_3$ ne sont pas des carrés dans $K_0$,  sinon on trouve que $i\in K_0$, ce qui est absurde.
Donc \{$\varepsilon_1$, $\varepsilon_2$, $\varepsilon_3$\} est un SFU de $K_0$ et comme $2\varepsilon_2$ est un carré dans $K_0$, alors d'après \cite{Az-05},  \{$\varepsilon_1$, $\sqrt{i\varepsilon_2}$, $\varepsilon_3$\} est un SFU de $K$.
\end{proof}
\begin{them}\label{23}
Soit le corps $\kk=\QQ(\sqrt{2p_1p_2}, i)$ avec $p_1$ et $p_2$ deux nombres premiers congrus à $ 1\pmod 4$
et  au moins deux des éléments \{$(\frac{p_1}{p_2})$, $(\frac{2}{p_1})$, $(\frac{2}{p_2})$\} valent $-1$ et soit $\mathbf{C}_{\mathds{k},2}$ le $2$-groupe de classes de $\kk$. Posons $\KK_1=\kk(\sqrt {p_1})=\QQ(\sqrt{p_1}, \sqrt{2p_2}, i)$, $\KK_2=\kk(\sqrt {p_2})=\QQ(\sqrt{p_2}, \sqrt{2p_1}, i)$, $p_1=e^2+(2f)^2$ et $p_1=\pi_1\pi_2=(e+2if)(e-2if)$. Soient $\mathcal{H}_0$, $\mathcal{H}_1$ et $\mathcal{H}_2$ les idéaux de $\kk$ au dessus de $1+i$, $\pi_1$ et $\pi_2$ respectivement, alors exactement quatre classes de $\mathbf{C}_{\mathds{k},2}$ capitulent dans $\KK_1$ et $\KK_2$, de plus  $kerj_{\KK_1}=\langle[\mathcal{H}_1], [\mathcal{H}_2]\rangle$ et $kerj_{\KK_2}=\langle[\mathcal{H}_0\mathcal{H}_1], [\mathcal{H}_0\mathcal{H}_2]\rangle$.
\end{them}
\begin{proof}  Comme $p_1\equiv p_2\equiv1\pmod 4$, alors la norme de l'unité fondamentale de $\QQ(\sqrt{p_1})$
(resp $\QQ(\sqrt{p_2})$) est égale à $-1$, et d'après le lemme (\ref{4}) la norme de l'unité fondamentale de
 $\QQ(\sqrt{2p_1p_2})$ est égale à $-1$. Donc d'après les proposition (\ref{21}) et (\ref{22}), $\KK_j$,  où
 $j\in\{1,2\}$, a pour SFU l'un des  trois systèmes  $\{\varepsilon_1, \sqrt{i\varepsilon_2}, \varepsilon_3\}$ ou $\{\varepsilon_1, \varepsilon_2,\sqrt{\varepsilon_1\varepsilon_2\varepsilon_3}\}$ ou $\{\varepsilon_1, \varepsilon_2,\sqrt{i\varepsilon_1\varepsilon_2\varepsilon_3}\}$.
 On note par $E_{\KK_j}$ le groupe des unités de $\KK_j$, alors $E_{\KK_j}=\langle i, \varepsilon_1, \sqrt{i\varepsilon_2}, \varepsilon_3\rangle$ ou  $E_{\KK_j}=\langle i, \varepsilon_1, \varepsilon_2,\sqrt{\varepsilon_1\varepsilon_2\varepsilon_3}\rangle$ ou $E_{\KK_j}=\langle i,\varepsilon_1, \varepsilon_2,\sqrt{i\varepsilon_1\varepsilon_2\varepsilon_3}\rangle$. Donc $N_{\KK_j/\kk}(E_{\KK_j})=\langle i, \varepsilon_3^2\rangle$ (dans ce cas la norme de l'unité fondamentale de $\QQ(\sqrt{2p_2})$ (resp $\QQ(\sqrt{2p_1})$)  est 1) ou $N_{\KK_j/\kk}(E_{\KK_j})=\langle -1, \varepsilon_3\rangle$  ou $N_{\KK_j/\kk}(E_{\KK_j})=\langle -1, i\varepsilon_3\rangle$, d'autre part on sait d'après \cite{Az-05} que le groupe d'unités de $\kk$ est $E_{\kk}=\langle i, \varepsilon_3\rangle$, d'où $[E_{\kk}:N_{\KK_j/\kk}(E_{\KK_j})]=2$, donc d'après le théorème (\ref{17}), le nombre des classes qui capitulent dans $\KK_j$ est égale à 4.\\
 \indent Dans la suite on va déterminer ces classes dans chaque cas:\\
\indent (i) $kerj_{\KK_1}=\langle[\mathcal{H}_1], [\mathcal{H}_2]\rangle$. En effet \\
 $\bullet $ Comme $(\mathcal{H}_1\mathcal{H}_2)^2=(\pi_1\pi_2)=(p_1)$, alors $\mathcal{H}_1\mathcal{H}_2=(\sqrt{p_1})$, donc l'idéal $\mathcal{H}_1\mathcal{H}_2$ capitule dans $\KK_1$.\\
  $\bullet $  Montrons que $[\mathcal{H}_2]$ la classe de $\mathcal{H}_2$ capitule dans $\KK_1$. D'après la démonstration de la proposition (\ref{21}), on a: $\sqrt{2\pi_2\varepsilon_1}\in\KK_1$ ou $\sqrt{\pi_2\varepsilon_1}\in\KK_1$, où $\varepsilon_1$ est l'unité fondamentale de $\QQ(\sqrt{p_1})$, alors ils existent $\alpha$ et $\beta$ dans $\KK_1$ tels que $2\pi_2\varepsilon_1=\alpha^2$ ou $\pi_2\varepsilon_1=\beta^2$, donc $(2\pi_2)=(\alpha^2)$ ou $(\pi_2)=(\beta^2)$, or $\mathcal{H}_0^2=(1+i)$ et  $\mathcal{H}_2^2=(\pi_2)$, alors $(1+i)\mathcal{H}_2=(\alpha)$ ou  $\mathcal{H}_2=(\beta)$, par suite  $\mathcal{H}_2=(\frac{\alpha}{1+i})$ ou  $\mathcal{H}_2=(\beta)$, donc $\mathcal{H}_2$ capitule dans $\KK_1$.\\
\indent   Comme $[\mathcal{H}_1\mathcal{H}_2]$ et $[\mathcal{H}_2]$ sont principaux dans $\KK_1$, alors $[\mathcal{H}_1\mathcal{H}_2\mathcal{H}_2]=[\mathcal{H}_1]$ est principal dans $\KK_1$. Par suite $kerj_{\KK_1}=\langle[\mathcal{H}_1], [\mathcal{H}_2]\rangle$.\\
\indent (ii) Montrons que $kerj_{\KK_2}=\langle[\mathcal{H}_0\mathcal{H}_1], [\mathcal{H}_0\mathcal{H}_2]\rangle$, on sait que $\KK_2=\kk(\sqrt {p_2})=\QQ(\sqrt{p_2}, \sqrt{2p_1}, i)$, $p_1=\pi_1\pi_2$   et $p_2=\pi_3\pi_4$, désignons par $\varepsilon_1$ (resp $\varepsilon_3$) l'unité fondamentale de $\QQ(\sqrt{p_2})$ (resp $\QQ(\sqrt{2p_1p_2})$), alors en multipliant les résultats des égalités (\ref{13}) et (\ref{15}) on trouve que  $\sqrt{(1+i)\pi_1\varepsilon_1\varepsilon_3}\in\KK_2$  et  $\sqrt{(1-i)\pi_2\varepsilon_1\varepsilon_3}\in\KK_2$, donc il existe $\alpha$ et $\beta$ dans $\KK_2$ tels que $\mathcal{H}_0^2\mathcal{H}_1^2=(\alpha^2)$ et  $\mathcal{H}_0^2\mathcal{H}_2^2=(\beta^2)$, alors
 $\mathcal{H}_0\mathcal{H}_1=(\alpha)$ et  $\mathcal{H}_0\mathcal{H}_2=(\beta)$, d'où $\mathcal{H}_0\mathcal{H}_1$ et  $\mathcal{H}_0\mathcal{H}_2$ capitulent dans $\KK_2$
\end{proof}
\subsection{\textbf{Capitulation dans l'extension $\KK_3$}}
Soient  $p_1$ et $p_2$  deux nombres premiers congrus à $1 \pmod 4$, $\kk=\QQ(\sqrt{2p_1p_2},i)$, $\KK_3=\kk(\sqrt{2})=\QQ(\sqrt 2, \sqrt{p_1p_2}, i)$, $\KK_3^+=\QQ(\sqrt 2,\sqrt{p_1p_2})$, $\varepsilon_1$ (resp. $\varepsilon_2$, $\varepsilon_3$ ) l'unité fondamentale de $k_1=\QQ(\sqrt 2)$ (resp. $k_2=\QQ(\sqrt{p_1p_2})$, $k_3=\QQ(\sqrt{2p_1p_2})$), $I=\{0, 1\}$ et $E_\kk$ (resp. $E_{\KK_3})$ le groupe d'unité de $\kk$ ( resp. $\KK_3$). On sait que $\varepsilon_1$ et $\varepsilon_3$ sont de normes $-1$, par contre $\varepsilon_2$ est de norme $\pm1$.
\begin{lem}[\cite{Az-00}]\label{24}
Soient $d$ un entier relatif sans facteur carré et $\varepsilon=x+y\sqrt d$ l'unité fondamentale de $\QQ(\sqrt d)$ où x et y sont des entiers ou bien  des semi-entiers. Si $\varepsilon$ est de norme $1$, alors $2(x\pm1)$ et $2d(x\pm1)$ ne sont pas des carrés dans $\QQ$.
\end{lem}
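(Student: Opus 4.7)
Mon plan est de raisonner par l'absurde: \`a partir de l'hypoth\`ese que l'une des quatre quantit\'es $2(x\pm1)$ ou $2d(x\pm1)$ est un carr\'e dans $\QQ$, je construirai explicitement un \'el\'ement $\eta$ de l'anneau des entiers de $\QQ(\sqrt d)$ tel que $\eta^2=\varepsilon$ ou $\eta^2=\overline{\varepsilon}$, ce qui contredira la minimalit\'e de $\varepsilon$; dans certains cas, $\eta$ sera de norme $-1$, ce qui contredira en outre l'hypoth\`ese $N(\varepsilon)=1$. L'ingr\'edient commun sera l'\'egalit\'e $(x-1)(x+1)=dy^2$ (ou $(X-2)(X+2)=dY^2$ avec $X=2x$, $Y=2y$ impairs, dans le cas semi-entier), qui provient directement de $N(\varepsilon)=1$.

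Je traiterai d'abord le cas $2(x+1)=a^2$: l'enti\`eret\'e de $2(x+1)$ donne $a\in\ZZ$, et la parit\'e de $a^2$ force $a=2b$, d'o\`u $x+1=2b^2$ et $x-1=2(b^2-1)$. L'\'egalit\'e $4b^2(b^2-1)=dy^2$ entra\^ine, $d$ \'etant sans facteur carr\'e, l'existence d'un $c\in\ZZ$ tel que $b^2-1=dc^2$ et $y=\pm 2bc$. L'\'el\'ement $\eta=b+c\sqrt d$ est alors une unit\'e de norme $1$ v\'erifiant $\eta^2=b^2+dc^2+2bc\sqrt d=x+y\sqrt d=\varepsilon$, contredisant la minimalit\'e de $\varepsilon$. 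Le cas $2(x-1)=a^2$ est sym\'etrique et conduit \`a $b^2+1=dc^2$, fournissant une unit\'e de norme $-1$, en contradiction avec $N(\varepsilon)=1$.

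Pour les cas $2d(x\pm1)=a^2$, j'\'etablirai d'abord que $d$ divise $a$ (en exploitant que $d$ est sans facteur carr\'e), puis je poserai $a=dt$ pour me ramener \`a une \'equation de type Pell $dt^2-s^2=\pm 4$, qui fournira une unit\'e $\eta=(s+t\sqrt d)/2$ (\'eventuellement semi-enti\`ere) satisfaisant $\eta^2=\varepsilon$ ou $\overline{\varepsilon}$, menant \`a la m\^eme contradiction.

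La principale difficult\'e sera la gestion rigoureuse du cas semi-entier ($d\equiv 1\pmod 4$): il faudra travailler tout au long avec $X=2x$ et $Y=2y$ impairs, et v\'erifier que l'\'el\'ement $\eta$ construit appartient bien \`a l'anneau des entiers $\ZZ[\frac{1+\sqrt d}{2}]$. Une subtilit\'e secondaire appara\^itra dans les cas $2d(x\pm1)$ lorsque $d$ est pair, car $2d$ cesse alors d'\^etre sans facteur carr\'e; une analyse de parit\'e remplacera l'argument direct de divisibilit\'e permettant d'extraire $d\mid a$.
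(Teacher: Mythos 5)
The paper does not actually prove this lemma: it is imported verbatim from \cite{Az-00}, so there is no internal argument to compare yours with. Judged on its own, your plan is sound and is in substance the standard proof: from $N(\varepsilon)=1$ you get $(x-1)(x+1)=dy^{2}$, and the assumption that one of $2(x\pm1)$, $2d(x\pm1)$ is a rational square lets you produce a unit $\eta\in\QQ(\sqrt d)$ with $\eta^{2}=\varepsilon$ (equivalently $\sqrt{\varepsilon}\in\QQ(\sqrt d)$), impossible for a fundamental unit, and in two of the four cases $\eta$ even has norm $-1$, contradicting $N(\varepsilon)=1$ directly. The argument in the cited reference (and the computations echoed in the proofs of Propositions \ref{21} and \ref{29} of this paper) reaches the same conclusion more quickly via the identity $2\varepsilon=(\sqrt{x+1}+\sqrt{x-1})^{2}$, which instantly places $\sqrt{\varepsilon}$ in $\QQ(\sqrt d)$ once $2(x\pm1)$ or $2d(x\pm1)$ is a square; your explicit factorization route is equivalent but carries more bookkeeping. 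Two points in your sketch need genuine care when written out: the step ``$4b^{2}(b^{2}-1)=dy^{2}$ and $d$ squarefree, hence $b^{2}-1=dc^{2}$ and $y=\pm2bc$'' is not a consequence of squarefreeness alone --- you must use $\gcd(b^{2},b^{2}-1)=1$ and a parity-of-valuations argument at every prime dividing $d$, including $p=2$ when $d$ is even; and in the semi-integer case $2(x\pm1)=2x\pm2$ is an odd integer, so the reduction ``$a^{2}$ pair donc $a=2b$'' disappears and the whole analysis must be redone with $2x$, $2y$ odd (as you anticipate), where the contradiction is again that $\sqrt{\varepsilon}=\frac{1}{2}(a+\frac{2y}{a}\sqrt d)$ lies in $\QQ(\sqrt d)$. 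With these details supplied, your proof is complete and faithful in spirit to the one in \cite{Az-00}.
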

\begin{lem}\label{28}
 Si la norme de $\varepsilon_2=x+y\sqrt{p_1p_2}$ est égale à 1, alors $x\pm1$ n'est pas un carré dans $\NN.$
\end{lem}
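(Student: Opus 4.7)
Mon plan est de proc\'eder par l'absurde. Supposons qu'il existe $\epsilon \in \{+1, -1\}$ et $b \in \NN$ tels que $x + \epsilon = b^2$, ce qui traite simultan\'ement les deux signes du $\pm$. Puisque la norme de $\varepsilon_2$ vaut $1$, on a $(x-1)(x+1) = p_1 p_2 y^2$, et apr\`es substitution cela se r\'e\'ecrit $b^2 (b^2 - 2\epsilon) = p_1 p_2 y^2$.

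D'abord, je montrerais que $b$ est n\'ecessairement impair: si $b$ \'etait pair, alors $v_2(b^2) \geq 2$ et $v_2(b^2 - 2\epsilon) = 1$, donc $v_2(b^2(b^2-2\epsilon))$ serait impair, ce qui contredit $v_2(p_1 p_2 y^2) = 2v_2(y)$. Comme $b$ est impair, $\gcd(b^2, b^2 - 2\epsilon)$ divise $2\epsilon$ et vaut donc $1$. L'\'etape cl\'e (et mon obstacle principal) est alors la descente: en comparant les valuations $p$-adiques des deux membres de $b^2(b^2-2\epsilon) = p_1 p_2 y^2$ et en utilisant la coprimalit\'e de $b^2$ et de $b^2-2\epsilon$, on v\'erifie que $v_p(b^2 - 2\epsilon)$ est pair pour tout premier $p \notin \{p_1, p_2\}$ et impair pour $p \in \{p_1, p_2\}$. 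J'en d\'eduirais l'existence d'un $c \in \ZZ$ tel que $b^2 - 2\epsilon = p_1 p_2 c^2$.

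Il resterait \`a r\'eduire cette \'egalit\'e modulo $p_1$ et $p_2$. Comme $p_1 \nmid b$ (sinon $p_1$ diviserait \`a la fois $b^2$ et $b^2 - 2\epsilon$, en contradiction avec la coprimalit\'e), la relation $b^2 \equiv 2\epsilon \pmod{p_1}$ montre que $2\epsilon$ est un carr\'e non nul modulo $p_1$, soit $(\frac{2\epsilon}{p_1}) = 1$. Puisque $p_1 \equiv 1 \pmod 4$, $(\frac{-1}{p_1}) = 1$, et par cons\'equent $(\frac{2}{p_1}) = 1$. Un raisonnement identique fournit $(\frac{2}{p_2}) = 1$. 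Or l'hypoth\`ese g\'en\'erale du papier stipule qu'au moins deux des trois symboles $\{(\frac{p_1}{p_2}), (\frac{2}{p_1}), (\frac{2}{p_2})\}$ valent $-1$, ce qui est incompatible avec $(\frac{2}{p_1}) = (\frac{2}{p_2}) = 1$. Cette contradiction termine la preuve.
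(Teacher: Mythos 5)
Votre preuve est correcte, mais elle emprunte une voie r\'eellement diff\'erente de celle du papier. Le papier d\'eduit le lemme de la th\'eorie de l'indice d'unit\'es~: comme $p_1p_2\equiv1\pmod 4$, le lemme~\ref{3} donne $Q=1$ pour $\QQ(\sqrt{p_1p_2},i)$, donc $2\varepsilon_2$ n'est pas un carr\'e dans $\QQ(\sqrt{p_1p_2})$~; or si $x\pm1=b^2$, la relation de norme donne $x\mp1=p_1p_2c^2$ et $2\varepsilon_2=(b+c\sqrt{p_1p_2})^2$, d'o\`u la contradiction. Vous, au contraire, faites une descente \'el\'ementaire sur $(x-1)(x+1)=p_1p_2y^2$ (parit\'e de $b$, coprimalit\'e, comparaison des valuations pour obtenir $x\mp1=p_1p_2c^2$), puis une r\'eduction modulo $p_1$ et $p_2$ qui force $(\frac{2}{p_1})=(\frac{2}{p_2})=1$, ce qui contredit l'hypoth\`ese globale du papier selon laquelle au moins deux des symboles $\{(\frac{p_1}{p_2}),(\frac{2}{p_1}),(\frac{2}{p_2})\}$ valent $-1$~; chaque \'etape de cette descente est juste. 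Ce que chaque approche apporte~: la v\^otre est autosuffisante (ni lemme~\ref{3} ni indice d'unit\'es) et rend explicite le contenu arithm\'etique de l'\'enonc\'e (l'existence d'un tel carr\'e \'equivaudrait essentiellement \`a $(\frac{2}{p_1})=(\frac{2}{p_2})=1$), tandis que la preuve du papier est plus courte et, surtout, n'utilise pas la condition sur les symboles de Legendre~: elle \'etablit le lemme sous la seule hypoth\`ese $p_1p_2\equiv1\pmod 4$. Notez donc que votre argument d\'emontre un \'enonc\'e l\'eg\`erement moins g\'en\'eral~; il reste licite ici puisque la condition sur les symboles fait partie des hypoth\`eses permanentes du papier, mais il serait bon de la mentionner explicitement comme hypoth\`ese utilis\'ee, car elle ne figure ni dans l'\'enonc\'e du lemme ni dans le pr\'eambule de la sous-section.
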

\begin{proof}
 Comme $p_1p_2\equiv 1 \pmod 4$, alors l'indice d'unité de $k_2(i)$ est égal à 1,  donc $2\varepsilon_2$ n'est pas un carré dans $k_2$, ceci implique que $x\pm1$ n'est pas un carré dans $\NN$.
\end{proof}
\begin{propo}\label{29}
Supposons que  la norme de $\varepsilon_2$, l'unité fondamental de $k_2=\QQ(\sqrt{p_1p_2})$, est égale à $1$, alors $\{\varepsilon_1,  \varepsilon_2,  \varepsilon_3 \}$ est un SFU de $\KK_3^+$ et de $\KK_3.$
\end{propo}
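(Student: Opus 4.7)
The plan is to establish the SFU property in two stages: first for the real biquadratic field $\KK_3^+=\QQ(\sqrt 2,\sqrt{p_1p_2})$, then to extend to the CM field $\KK_3=\KK_3^+(i)$. For the real stage I would invoke the Kuroda--Kubota theorem on real biquadratic fields: the three fundamental units $\varepsilon_1,\varepsilon_2,\varepsilon_3$ of the quadratic subfields form an SFU precisely when no nontrivial sign-product $\pm\varepsilon_1^{a_1}\varepsilon_2^{a_2}\varepsilon_3^{a_3}$ is a square in $\KK_3^+$. Tabulating signs in the four real embeddings of $\KK_3^+$ using $N(\varepsilon_1)=N(\varepsilon_3)=-1$ and $N(\varepsilon_2)=1$, six of the seven nontrivial products exhibit mixed signs and thus cannot be squares; only $\pm\varepsilon_2$ is totally definite.

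The surviving candidate is $\varepsilon_2$ (totally positive). Since $\sqrt 2\in\KK_3^+$, Kummer theory for the quadratic extension $\KK_3^+/\QQ(\sqrt{p_1p_2})$ shows $\varepsilon_2$ is a square in $\KK_3^+$ if and only if $\varepsilon_2$ or $2\varepsilon_2$ is a square in $\QQ(\sqrt{p_1p_2})$. The first is impossible because $\varepsilon_2$ is fundamental, and the second is excluded by the argument inside the proof of Lemma \ref{28} (the unit index of $\QQ(\sqrt{p_1p_2},i)$ equals $1$). This concludes the first stage.

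For the CM stage, $\KK_3$ contains $\zeta_8=(1+i)/\sqrt 2$, and general theory gives $Q:=[E_{\KK_3}:\langle\zeta_8,\varepsilon_1,\varepsilon_2,\varepsilon_3\rangle]\in\{1,2\}$. Assuming $Q=2$, there would exist $u\in E_{\KK_3}$ with $u^2=\zeta_8^{\epsilon}\varepsilon_1^{a_1}\varepsilon_2^{a_2}\varepsilon_3^{a_3}$ for some $\epsilon,a_i\in\{0,1\}$ nontrivially (using that $i=\zeta_8^2$ is a square in $\KK_3$). If $\epsilon=0$, writing $u=\alpha+i\beta$ with $\alpha,\beta\in\KK_3^+$ and matching real and imaginary parts forces $\alpha=0$ or $\beta=0$, so $\pm u^2$ would be a square in $\KK_3^+$, contradicting the first stage. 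If $\epsilon=1$, the identity $(u\bar u)^2=u^2\bar u^2=(\prod\varepsilon_i^{a_i})^2$ together with $u\bar u>0$ at every complex place forces $\pm\prod\varepsilon_i^{a_i}$ to be totally positive, so by the signature table only $u^2=\zeta_8\varepsilon_2$ survives.

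The main obstacle is ruling out $u^2=\zeta_8\varepsilon_2$. Any square root equals $\pm\zeta_{16}\sqrt{\varepsilon_2}$, so the claim reduces to $\zeta_{16}\sqrt{\varepsilon_2}\notin\KK_3$. Writing $\varepsilon_2=x_2+y_2\sqrt{p_1p_2}$, a direct calculation gives $\QQ(\sqrt{\varepsilon_2})=\QQ(\sqrt{p_1p_2},\sqrt{2(x_2+1)})$. I then check $\sqrt{2(x_2+1)}\notin\QQ(\zeta_{16},\sqrt{p_1p_2})=\KK_3(\zeta_{16})$ by eliminating the four candidate rational square classes for $2(x_2+1)$: Lemma \ref{24} rules out $1$ and $p_1p_2$, Lemma \ref{28} rules out $2$ directly, and the factorization $(x_2-1)(x_2+1)=p_1p_2y_2^2$ combined with Lemma \ref{28} rules out $2p_1p_2$ (setting $x_2+1=p_1p_2k^2$ forces $x_2-1=\ell^2$ with $y_2=k\ell$). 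Consequently $\KK_3(\zeta_{16},\sqrt{\varepsilon_2})/\KK_3$ is genuinely biquadratic, so $\zeta_{16}\sqrt{\varepsilon_2}$ is moved by the automorphism sending $\zeta_{16}\mapsto-\zeta_{16}$ and fixing $\sqrt{\varepsilon_2}$, hence lies outside $\KK_3$. This forces $Q=1$, completing the proof.
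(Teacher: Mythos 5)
Your argument is correct in substance but takes a genuinely different route from the paper's. For the real field $\KK_3^+$ the paper rules out the seven candidate products by norm arguments (a square would force $i\in\KK_3^+$ or $\sqrt2\in\QQ(\sqrt{p_1p_2})$) and then disposes of the one delicate unit $\varepsilon_2$ by an explicit case analysis of the square classes of $x\pm1$ arising from $x^2-1=y^2p_1p_2$, via Lemmas \ref{24} and \ref{28}; that analysis also yields the by-product $\sqrt{p_1\varepsilon_2}\in\KK_3^+$, which the paper reuses later in the proof of Theorem \ref{30}. Your signature table plus the Kummer criterion for $\KK_3^+=k_2(\sqrt2)$ ($\varepsilon_2$ is a square there iff $\varepsilon_2$ or $2\varepsilon_2$ is a square in $k_2$, the first excluded by fundamentality, the second by the unit index of $k_2(i)$ being $1$ as in Lemma \ref{28}) is cleaner and avoids the case analysis, at the price of not producing that auxiliary identity. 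For the descent to $\KK_3$ the paper simply cites \cite{Az-05}, whereas you prove directly that the relevant unit index is $1$; this is the main added value of your write-up, and the reduction to $\zeta_{16}\sqrt{\varepsilon_2}\notin\KK_3$ together with the rational square-class computation in $\QQ(\zeta_{16},\sqrt{p_1p_2})$ is sound. Two small points should be patched there: in the case $\epsilon=1$ your positivity argument leaves surviving not only $u^2=\zeta_8\varepsilon_2$ but also $u^2=\zeta_8$, which must be excluded by observing $\zeta_{16}\notin\KK_3$ (immediate, since $\mathrm{Gal}(\KK_3/\QQ)$ is elementary abelian of exponent $2$ while $\QQ(\zeta_{16})/\QQ$ contains a cyclic quartic subextension, or since $p_1$ ramifies in $\KK_3$ but not in $\QQ(\zeta_{16})$); and your final computation tacitly treats $x_2,y_2$ as rational integers, whereas the fundamental unit of $k_2$ may have half-integral coordinates --- the argument still goes through (Lemma \ref{24} is stated for semi-integers, and the square classes $2$ and $2p_1p_2$ are then impossible because $x_2+1$ has odd $2$-adic valuation), but this case should be mentioned explicitly, as the paper does in part (2) of its proof.
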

\begin{proof}
Puisque $\varepsilon_1$ et $\varepsilon_3$ sont de normes $-1$, alors elles ne sont pas des carrés dans $\KK_3^+$, de même $\varepsilon_1\varepsilon_2$, $\varepsilon_1\varepsilon_3$, $\varepsilon_2\varepsilon_3$ et $\varepsilon_1\varepsilon_2\varepsilon_3$  ne sont pas des carrés dans $\KK_3^+$, sinon en prenant une norme convenable on trouve que $i\in \KK_3^+$, ce qui est faux. De plus
 $(2+\sqrt2)\varepsilon_1^i\varepsilon_2^j\varepsilon_3^k$ ne peut pas être un carré dans $\KK_3^+$, pour tout i, j et k de $I$, sinon il existe $\alpha\in\KK_3^+$ tel que $\alpha^2=(2+\sqrt 2)\varepsilon_1^i\varepsilon_2^j\varepsilon_3^k$, alors $(N_{\KK_3^+/k_2}(\alpha))^2=2(-1)^{i+k}\varepsilon_2^{2j}$, ceci implique que $\sqrt2\in\QQ(\sqrt{p_1p_2})$, ce qui est absurde.\\
 \indent  (1) Soit $\varepsilon_2=x+y\sqrt{p_1p_2}$, avec x et y deux entiers, alors $x^2-1=y^2p_1p_2$. d'après le lemme (\ref{24}), $2(x\pm1)$ et $2p_1p_2(x\pm1)$ ne sont pas des  carrés dans $\NN$, ainsi on a:\\
 \indent (i) Si $p_1p_2(x\pm1)$ est un carré dans $\NN$, alors il existe $(y_1, y_2)\in\ZZ^2$ tel que
$\left\{
 \begin{array}{ll}
 x\mp1=y_1^2,\\
 x\pm1=y_2^2p_1p_2.
 \end{array}\right.$\\
 d'où $x\mp1$ est un carré dans $\NN$, et ceci contredit le lemme (\ref{28}).\\
\indent (ii) Soit $p_1(x\pm1)$ ou $2p_1(x\pm1)$ un carré dans $\NN$. Si $p_1(x\pm1)$ est  un carré dans $\NN$, alors  il existe $(y_1, y_2)\in\ZZ^2$ tel que
$\left\{
 \begin{array}{ll}
 x\pm1=p_1y_1^2,\\
 x\mp1=p_2y_2^2.
 \end{array}\right.$\\
$\sqrt{\varepsilon_2}=\frac{1}{2}(y_1\sqrt{2p_1}+y_2\sqrt{2p_2})$, donc $\sqrt{\varepsilon_2}\not\in\KK_3^+$, $\sqrt{p_1\varepsilon_2}\in\KK_3^+$ et $\sqrt{p_2\varepsilon_2}\in\KK_3^+$. On trouve les mêmes résultats pour le cas: $2p_1(x\pm1)$.\\
 \indent (2) Soit $\varepsilon_2=\frac{1}{2}(x+y\sqrt{p_1p_2})$, où x et y sont deux entiers impairs, comme $\varepsilon_2$ est de norme $1$, alors  $(x-2)(x+2)=y^2p_1p_2$, $(x+2)-(x-2)=4$ et le pgcd de $x+2$ et $x-2$ divise $4$, or  $x+2$ et $x-2$ sont impairs, donc leur pgcd est égal à $1$.\\
 \indent (i) Si $p_1p_2(x\pm2)$ un carré dans $\NN$, alors il existe $(y_1, y_2)\in\ZZ^2$ tel que
$\left\{
 \begin{array}{ll}
 x\pm2=p_1p_2y_1^2,\\
 x\mp2=y_2^2.
 \end{array}\right.$\\
 D'où $\sqrt{\varepsilon_2}=\frac{1}{2}(y_1\sqrt{p_1p_2}+y_2)$, donc $\sqrt{\varepsilon_2}\in k_2=\QQ(\sqrt{p_1p_2})$, ce qui est absurde puisque $\varepsilon_2$ est l'unité fondamentale de $k_2$. Donc ce cas ne se présente pas. \\
  \indent (ii) Si $p_1(x\pm2)$ un carré dans $\NN$, alors il existe $(y_1, y_2)\in\ZZ^2$ tel que
$\left\{
 \begin{array}{ll}
 x\pm2=p_1y_1^2,\\
 x\mp2=p_2y_2^2.
 \end{array}\right.$\\
 Donc on trouve que  $\sqrt{\varepsilon_2}=\frac{1}{2}(y_1\sqrt{2p_1}+y_2\sqrt{2p_2})$, par suite $\sqrt{\varepsilon_2}\not\in\KK_3^+$, $\sqrt{p_1\varepsilon_2}\in\KK_3^+$ et $\sqrt{p_2\varepsilon_2}\in\KK_3^+$.\\
  De tous ces résultats  en déduit que $\{\varepsilon_1,  \varepsilon_2,  \varepsilon_3 \}$ est SFU de $\KK_3^+$ et
 d'après \cite{Az-05}, $\KK_3$ a le même système fondamental d'unités que $\KK_3^+$.
\end{proof}
\begin{propo}\label{34}
On suppose que la norme de $\varepsilon_2$ est égale à $-1$,  alors on a:\\
\indent (i) Si $\varepsilon_1\varepsilon_2\varepsilon_3$ est un carré dans $\KK_3^+$, alors
 $\{ \varepsilon_1, \varepsilon_2, \sqrt{\varepsilon_1\varepsilon_2\varepsilon_3}\}$ est un SFU de $\KK_3^+$ et de $\KK_3$.\\
 \indent (ii) Sinon $\{ \varepsilon_1, \varepsilon_2, \varepsilon_3\}$ est un SFU de $\KK_3^+$ et de $\KK_3$.
\end{propo}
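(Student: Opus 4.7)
The proposition splits naturally into two applications of the generic results already proved as Propositions \ref{18} and \ref{19}, with $m=2$ and $n=p_1p_2$, giving $mn = 2p_1p_2$ and $K_0 = \KK_3^+$, $K = \KK_3$. My plan is to reduce case (i) and case (ii) to those propositions and to handle carefully the passage from $\KK_3^+$ to $\KK_3$.

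First I would verify that the hypotheses of Proposition \ref{18} are fulfilled: we need the three fundamental units $\varepsilon_1, \varepsilon_2, \varepsilon_3$ all of norm $-1$. The unit $\varepsilon_1 = 1+\sqrt{2}$ has norm $-1$; by Lemma \ref{4} applied with $d=2p_1p_2$, the unit $\varepsilon_3$ has norm $-1$; and $\varepsilon_2$ has norm $-1$ by the standing hypothesis of the proposition. Consequently, Proposition \ref{18} applies directly and yields the SFU of $\KK_3^+$: it is $\{\varepsilon_1,\varepsilon_2,\sqrt{\varepsilon_1\varepsilon_2\varepsilon_3}\}$ in case (i) when $\varepsilon_1\varepsilon_2\varepsilon_3$ is a square in $\KK_3^+$, and $\{\varepsilon_1,\varepsilon_2,\varepsilon_3\}$ otherwise. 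This disposes of the real part of the statement.

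For the passage to $\KK_3 = \KK_3^+(i)$, I would invoke Proposition \ref{19}. In case (i), $\sqrt{\varepsilon_1\varepsilon_2\varepsilon_3}\in\KK_3^+$, so part (1) of Proposition \ref{19} gives $Q = 1$, and the SFU of $\KK_3$ coincides with that of $\KK_3^+$. In case (ii), part (2) says that $Q = 2$ if and only if $2\varepsilon_1\varepsilon_2\varepsilon_3$ is a square in $\KK_3^+$. The key remark here — the only non-automatic step — is that $\sqrt{2}\in\KK_3^+$ by construction, so $2\varepsilon_1\varepsilon_2\varepsilon_3 = (\sqrt{2})^2\cdot\varepsilon_1\varepsilon_2\varepsilon_3$ is a square in $\KK_3^+$ if and only if $\varepsilon_1\varepsilon_2\varepsilon_3$ itself is. Under the case (ii) hypothesis this fails, so $Q = 1$ and again the SFU of $\KK_3$ is the same as that of $\KK_3^+$, namely $\{\varepsilon_1,\varepsilon_2,\varepsilon_3\}$.

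The main (and only) obstacle is the little observation that multiplication by $2 = (\sqrt{2})^2$ preserves the squareness property in $\KK_3^+$; without this, case (ii) of the lifting step could a priori escape via part (3) of Proposition \ref{19}, which would force the SFU of $\KK_3$ to involve $\sqrt{i\varepsilon_1\varepsilon_2\varepsilon_3}$ instead of $\varepsilon_3$. Once this remark is in place, the entire proof is a mechanical application of Propositions \ref{18} and \ref{19}, so the proof remains short and requires no further computation of the type performed in Proposition \ref{29}.
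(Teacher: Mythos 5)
Your proposal is correct, and its first half coincides with the paper's argument: the paper also obtains the SFU of $\KK_3^+$ by applying Proposition \ref{18} with $m=2$, $n=p_1p_2$ (the hypotheses being met since $\varepsilon_1=1+\sqrt 2$ and, by Lemma \ref{4} together with the standing hypothesis on the symbols, $\varepsilon_3$ have norm $-1$). Where you diverge is the lifting step: the paper does not argue at all, it simply cites \cite{Az-05} for the fact that $\KK_3^+$ and $\KK_3$ have the same SFU, whereas you derive this from Proposition \ref{19}, via the observation that $\sqrt 2\in\KK_3^+$ makes $2\varepsilon_1\varepsilon_2\varepsilon_3$ a square in $\KK_3^+$ exactly when $\varepsilon_1\varepsilon_2\varepsilon_3$ is, so that the index of Proposition \ref{19} is $1$ in both cases and $E_{\KK_3}$ is generated by the roots of unity together with $E_{\KK_3^+}$. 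This is a more self-contained route and the reduction is clean; it buys an explicit reason for the coincidence of the two SFU's instead of an external citation. Two caveats are worth making explicit. First, the index $Q$ of Proposition \ref{19} is the unit index of $\KK_3/\KK_3^+$ and is not the index $Q$ appearing in Theorem \ref{30} (which equals $2$ precisely in your case (i), where your $Q$ is $1$); keep the notations apart to avoid an apparent contradiction. Second, your argument stands or falls with the validity of Proposition \ref{19} in the case $m=2$, where $\KK_3$ contains the eighth roots of unity $\frac{1+i}{\sqrt2}$ and not merely $i$; the paper's statement of Proposition \ref{19} imposes no parity restriction on $m,n$, so your use of it is legitimate within the paper's framework, but this is exactly the delicate point that the paper's blanket appeal to \cite{Az-05} (which treats these degree-$8$ fields directly) is designed to cover, and a careful referee would want a word confirming that the criterion of Proposition \ref{19} is available in this situation.
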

\begin{proof}
D'après la proposition (\ref{18}), $\KK_3^+$ a pour SFU $\{ \varepsilon_1, \varepsilon_2, \sqrt{\varepsilon_1\varepsilon_2\varepsilon_3}\}$ ou $\{\varepsilon_1, \varepsilon_2, \varepsilon_3\}$ suivant que $\varepsilon_1\varepsilon_2\varepsilon_3$ est un carré ou non dans $\KK_3^+$, de plus d'après \cite{Az-05}, $\KK_3^+$ et  $\KK_3$ ont même SFU.
\end{proof}
\brem\label{38}
 On procède comme dans la démonstration de la proposition (\ref{21}), pour prouver ce qui suit:\\
 a) $\varepsilon_2=x+y\sqrt{p_1p_2}$
  \begin{equation}\label{32}\left.
   \begin{aligned}\sqrt{\varepsilon_2}& = y_1\sqrt{\pi_1\pi_3}+y_2\sqrt{\pi_2\pi_4},\ ou  \\
   \sqrt{\varepsilon_2}& = y_1\sqrt{\pi_1\pi_4}+y_2\sqrt{\pi_2\pi_3},\ ou
    \  \\
    \sqrt{2\varepsilon_2}& = y_1\sqrt{\pi_1\pi_3}+y_2\sqrt{\pi_2\pi_4},\ ou \\
     \sqrt{2\varepsilon_2}& = y_1\sqrt{\pi_1\pi_4}+y_2\sqrt{\pi_2\pi_3},
     \end{aligned}
   \right\}
\end{equation}
où les $y_i$ sont dans $\ZZ[i]$ ou $\frac{1}{2}\ZZ[i]$.\\
  \indent b) De même en posant $\varepsilon_3=a+b\sqrt{2p_1p_2}$, on montre  que:
    \begin{equation}\label{33}
    \left.
   \begin{aligned}
  \sqrt{\varepsilon_3} = b_1\sqrt{(1+i)\pi_1\pi_3}+\\b_2\sqrt{(1-i)\pi_2\pi_4}), ou  \\
  \sqrt{\varepsilon_3} = b_1\sqrt{(1+i)\pi_1\pi_4}+\\b_2\sqrt{(1-i)\pi_2\pi_3}), ou
      \\
    \sqrt{2\varepsilon_3} = b_1\sqrt{(1+i)\pi_1\pi_3}+\\b_2\sqrt{(1-i)\pi_2\pi_4}), ou \\
    \sqrt{2\varepsilon_3} = b_1\sqrt{(1+i)\pi_1\pi_3}+\\b_2\sqrt{(1-i)\pi_2\pi_4}),
     \end{aligned}
   \right\}
\end{equation}
où les $b_i$ sont dans $\ZZ[i]$ ou $\frac{1}{2}\ZZ[i]$.
  Remarquons en fin que:
 \begin{equation}\label{37}
  \sqrt{2\varepsilon_1}=\sqrt{1+i}+\sqrt{1-i}.\end{equation}
Alors en multipliant les résultats des égalités (\ref{32}), (\ref{33}) et (\ref{37}) on trouve, dans le cas où  $\varepsilon_1\varepsilon_2\varepsilon_3$ n'est pas un carré dans $\QQ(\sqrt2, \sqrt{p_1p_2})$,  que  $\sqrt{\varepsilon_1\varepsilon_2\varepsilon_3}=\alpha\sqrt{p_1}+\beta\sqrt{p_2}+\gamma\sqrt{2p_1}+\delta\sqrt{2p_2}$, où $\alpha$, $\beta$, $\gamma$ et $\delta$ sont dans $\QQ$.
\erem
\begin{them}\label{30}
~Soient le corps $\kk=\QQ(\sqrt{2p_1p_2}, i)$, avec $p_1$ et $p_2$ deux nombres premiers congrus à $ 1\pmod 4$
et  au moins deux des éléments \{$(\frac{p_1}{p_2})$, $(\frac{2}{p_1})$, $(\frac{2}{p_2})$\} valent $-1$ et $\mathbf{C}_{\mathds{k},2}$ le $2$-groupe de classes de $\kk$. Posons $\KK_3=\kk(\sqrt {2})=\QQ(\sqrt{2}, \sqrt{p_1p_2}, i)$, $p_1=e^2+(2f)^2=\pi_1\pi_2=(e+2if)(e-2if)$. Soient $\mathcal{H}_0$, $\mathcal{H}_1$ et $\mathcal{H}_2$ les idéaux de $\kk$ au dessus de $1+i$, $\pi_1$ et $\pi_2$ respectivement. Notons par $Q$ l'indice des unités de $\QQ(\sqrt2, \sqrt{p_1p_2})$, d'où on a:
 \begin{enumerate}
   \item Si $Q=1$, alors  quatre classes de $\mathbf{C}_{\mathds{k},2}$ capitulent dans $\KK_3$ et $kerj_{\KK_3}=\langle[\mathcal{H}_0],[\mathcal{H}_1\mathcal{H}_2]\rangle$.
   \item Si $Q=2$, alors  deux classes de $\mathbf{C}_{\mathds{k},2}$ capitulent dans $\KK_3$ et $kerj_{\KK_3}=\langle[\mathcal{H}_0]\rangle$.
 \end{enumerate}
\end{them}
\begin{proof}Remarquons d'abord que $\sqrt i=\frac{1+i}{\sqrt2}\in\KK_3$.\\
$(1)$ Si $Q=1$, alors la norme de $\varepsilon_2$ est égale à $1$ ou la norme de $\varepsilon_2$ est égale à $-1$ et $\varepsilon_1\varepsilon_2\varepsilon_3$ n'est pas un carré dans $\QQ(\sqrt2, \sqrt{p_1p_2})$,  donc  d'après les propositions (\ref{29}) et (\ref{34})
 $E_{\KK_3}=\langle \sqrt i, \varepsilon_1, \varepsilon_2, \varepsilon_3\rangle$, ce qui implique que $N_{\KK_3/\kk}(E_{\KK_3})=\langle i, \varepsilon_3^2\rangle$, or $E_{\kk}=\langle  i, \varepsilon_3\rangle$, d'où $[E_\kk:N_{\KK_3/\kk}(E_{\KK_3})]=2$, alors d'après le théorème (\ref{17}), quatre classes  capitulent dans $\KK_3$.\\
\indent  Montrons que $[\mathcal{H}_0]$ et $[\mathcal{H}_1\mathcal{H}_2]$ sont principaux dans  $\KK_3$.\\
On remarque que $ \sqrt{(1+i)\varepsilon_1}=\frac{1}{2}[2+(1+i)\sqrt 2]\in\KK_3$, alors il existe un $\alpha\in\KK_3$ tel que $(1+i)\varepsilon_1=\alpha^2$, et comme $\mathcal{H}_0^2=(1+i)$ et $\varepsilon_1$ est une unité de $\KK_3$, alors $\mathcal{H}_0^2=(\alpha^2)$, d'où $\mathcal{H}_0=(\alpha)$.\\
 Si la norme de $\varepsilon_2$ est égale à $1$, alors  on a, d'après la démonstration de la proposition (\ref{29}), que $\sqrt{p_1\varepsilon_2}\in\KK_3$, donc il existe un $\beta\in\KK_3$ tel que $p_1\varepsilon_2=\beta^2$, d'où $(p_1)=(\beta^2)$, or  $(\mathcal{H}_1\mathcal{H}_2)^2=(p_1)$, donc $(\mathcal{H}_1\mathcal{H}_2)^2=(\beta^2)$, on conclut que  $\mathcal{H}_1\mathcal{H}_2=(\beta)$.\\
 Si la norme de $\varepsilon_2$ est égale à $-1$ et $\varepsilon_1\varepsilon_2\varepsilon_3$ n'est pas un carré dans $\QQ(\sqrt2, \sqrt{p_1p_2})$,  alors  d'après la remarque (\ref{38}), $\sqrt{p_1\varepsilon_1\varepsilon_2\varepsilon_3}\in\KK_3$, ceci implique $\mathcal{H}_1\mathcal{H}_2$ capitule dans $\KK_3$. Donc $kerj_{\KK_3}=\langle [\mathcal{H}_0], [\mathcal{H}_1\mathcal{H}_2]\rangle$.\\
 \indent (2) Si $Q=2$, alors la norme de $\varepsilon_2$ est égale à $-1$ et $\varepsilon_1\varepsilon_2\varepsilon_3$ est  un carré dans $\QQ(\sqrt2, \sqrt{p_1p_2})$, par suite d'après la proposition (\ref{34}) on a: $E_{\KK_3}=\langle\sqrt{i}, \varepsilon_1, \varepsilon_2, \sqrt{\varepsilon_1\varepsilon_2\varepsilon_3}\rangle$, d'où  $N_{\KK_3/\kk}(E_{\KK_3})=\langle{i}, \varepsilon_3\rangle$, et comme $E_{\kk}= \langle{i}, \varepsilon_3\rangle$, alors $[E_{\kk}:N_{\KK_3/\kk}(E_{\KK_3})]=1$, donc le théorème (\ref{17}) implique que deux classes  capitulent dans $\KK_3$. Et d'après la démonstration faite dans (1), on a: $kerj_{\KK_3}=\langle [\mathcal{H}_0]\rangle$. Et ceci achève la démonstration du théorème.
\end{proof}
Des théorèmes (\ref{23}) et (\ref{30}) découle le théorème:
\begin{them}
Soit le corps $\kk=\QQ(\sqrt{2p_1p_2}, i)$ avec $p_1$ et $p_2$ deux nombres premiers congrus à $ 1\pmod 4$
et  au moins deux des éléments \{$(\frac{p_1}{p_2})$, $(\frac{2}{p_1})$, $(\frac{2}{p_2})$\} valent $-1$ et soit $\mathbf{C}_{\mathds{k},2}$ le $2$-groupe de classes de $\kk$. Posons $\kk^*$ le corps de genres de $\kk$, $p_1=e^2+(2f)^2$ et $p_1=\pi_1\pi_2=(e+2if)(e-2if)$. Soient $\mathcal{H}_0$, $\mathcal{H}_1$ et $\mathcal{H}_2$ les idéaux de $\kk$ au dessus de $1+i$, $\pi_1$ et $\pi_2$ respectivement. Alors toutes les classes capitulent dans $\kk^*$ c-à-d: $kerj_{\kk^*}=\mathbf{C}_{\mathds{k},2}$.
\end{them}

\end{document}